\documentclass[11pt,reqno]{amsart}
\usepackage{amsmath}
\usepackage{cases}
\usepackage{mathrsfs}
\usepackage{bbm}
\usepackage{amssymb}
\usepackage{amscd}
\usepackage{amsfonts,latexsym,amsmath,
amsthm,amsxtra,mathdots,amssymb,latexsym,mathabx}
\usepackage[all,cmtip]{xy}
\RequirePackage{amsmath} \RequirePackage{amssymb}
\usepackage{color}
\usepackage{colordvi}
\usepackage{multicol}
\usepackage{hyperref}
\usepackage{mathtools}
\usepackage[margin=1in]{geometry}
\usepackage{xcolor}

\hypersetup{
    colorlinks,
    linkcolor={red!50!black},
    citecolor={blue!100!black},
    urlcolor={black}
}
\usepackage{cite}

\newcommand{\bea}{\begin{eqnarray}}
\newcommand{\eea}{\end{eqnarray}}
\newcommand{\bna}{\begin{eqnarray*}}
\newcommand{\ena}{\end{eqnarray*}}

\numberwithin{equation}{section}

\theoremstyle{plain}
\newtheorem{lemma}{Lemma}[section]
\newtheorem{theorem}[lemma]{Theorem}
\newtheorem{corollary}[lemma]{Corollary}

\theoremstyle{definition}

\newtheorem{remark}{Remark}

\newcommand{\cB}{\mathcal B}
\newcommand{\cC}{\mathcal C}

\newcommand{\cE}{\mathcal E}
\newcommand{\cS}{\mathcal S}

\newcommand{\Ad}{\mathrm{Ad}}
\newcommand{\new}{\mathrm{new}}
\newcommand{\old}{\mathrm{old}}

\newcommand{\cusp}{\mathrm{cusp}}

\renewcommand{\Re}{\operatorname{Re}}
\renewcommand{\Im}{\operatorname{Im}}

\newcommand{\GL}{\operatorname{GL}}

\newcommand{\eps}{\varepsilon}

\renewcommand{\bmod}[1]{\,(\mathrm{mod\,}{#1})}
\renewcommand{\i}{\mathrm{i}}
\makeatletter
\@namedef{subjclassname@2020}{%
  \textup{2020} Mathematics Subject Classification}

\title[]
{A $\mathrm{GL}_3$ level analog of Selberg's result for $S(t)$}

\author{Qingfeng Sun}
	\address{School of Mathematics and Statistics, Shandong University, Weihai\\Weihai, 264209, China}
	\email{qfsun@sdu.edu.cn}

\author{Hui Wang}
	\address{The University of Hong Kong Shenzhen Institute of Research and Innovation, Shenzhen, Guangdong 518057, China}
\email{wh0315@mail.sdu.edu.cn}

\subjclass[2020]{11F12, 11F66, 11F72.}

\keywords{Moments, the Kuznetsov formula of prime level, Selberg's limit theorem.}

\thanks{Q. Sun was partially supported by the National Natural Science Foundation of China
(Grant No. 12471005) and
the Natural Science Foundation of Shandong Province (Grant No. ZR2023MA003).}

\date{}

\begin{document}
\begin{abstract}
Let $\pi$ be a spherical cuspidal $\mathrm{GL}_3$ newform of prime
level $N$ and trivial central character, and define
$S(t,\pi)=\pi^{-1}\arg L(1/2+\i t,\pi)$.
Assuming the generalized Riemann hypothesis for $L(s,\pi)$,
we establish an asymptotic formula for each fixed weighted moment
of $S(t,\pi)$ as $N\to\infty$, for fixed $t>0$.
Consequently, under the same weights,
$S(t,\pi)/\sqrt{\log\log N}$ converges in distribution to a centered
Gaussian distribution of variance $1/(2\pi^2)$.
The key input is a weighted orthogonality formula derived from
the Kuznetsov formula of prime level.
\end{abstract}
\maketitle
\enlargethispage{3pt}

\section{Introduction}
\setcounter{equation}{0}
For $T>0$, let $N_\zeta(T)$ count the nontrivial zeros of the
Riemann zeta function $\zeta(s)$ whose imaginary parts lie in $(0,T]$, counted with multiplicity.
When $T$ is not the ordinate of any zero, the Riemann--von Mangoldt formula reads
(see Davenport \cite[Chapter~15]{Davenport})
\bea\label{N(T)}
 N_\zeta(T)=\frac{T}{2\pi}\log\frac{T}{2\pi}
 -\frac{T}{2\pi}+\frac78+S(T)+O(T^{-1}),
\eea
where
$$S(T)=\frac1\pi\arg\zeta\!\left(\frac12+\i T\right).$$ 
The argument is continued from
$+\infty+\i T$, where its limiting value is $0$, to $1/2+\i T$.
At a zero ordinate, the value of $S(T)$ is the average of its
two one-sided limits, i.e.,
\bna
S(T)=\lim_{\varepsilon\rightarrow 0}\frac{S(T+\varepsilon)+S(T-\varepsilon)}{2}.
\ena 
For $T$ away from zero ordinates, \eqref{N(T)} identifies
$S(T)$, up to $O(T^{-1})$, with the deviation of the zero count
from its smooth main term. The size and oscillation of this
remainder have been studied extensively; see Backlund
\cite{BacklundS}, Titchmarsh \cite{TitchmarshS},
Carneiro--Chandee--Milinovich \cite{CCM}, and
Bondarenko--Seip \cite{BondarenkoSeip}.
The classical estimates
$S(T)\ll\log T$ and
$\int_0^T S(t)\,\mathrm{d}t\ll\log T$
(see \cite[Chapter~9]{Titchmarsh}) give a pointwise bound
and show that its mean on $[0,T]$ tends to zero.
To understand the typical size and distribution of these
fluctuations, Selberg \cite{Selberg,Selberg1} studied the
moments of $S(T)$; see also Goldston \cite{GoldstonS}.

Selberg \cite{Selberg,Selberg1} proved that, for each fixed
positive integer $m$,
\[
 \frac1T\int_T^{2T}S(t)^{2m}\,\mathrm{d}t
 =\frac{(2m)!}{m!(2\pi)^{2m}}(\log\log T)^m
 +O_m\bigl((\log\log T)^{m-1/2}\bigr).
\]
The constants appearing on the right-hand side coincide with the even moments of 
a Gaussian distribution of variance $1/(2\pi^2)$.  Selberg further investigated 
the quantity $S(t,\chi)=\frac{1}{\pi}\arg L(1/2+\i t,\chi)$ as $\chi$ ranges over primitive Dirichlet characters.  For a prime modulus $q$
and fixed $t>0$, Selberg \cite{Selberg2} obtained
\[
 \frac1q\sum_{\substack{\chi\bmod q\\\chi\ {\rm primitive}}}
 S(t,\chi)^{2m}
 =\frac{(2m)!}{m!(2\pi)^{2m}}(\log\log q)^m
 +O_{t,m}\bigl((\log\log q)^{m-1/2}\bigr).
\]
This motivates the study of the argument evaluated at a fixed point on the critical line, with the underlying automorphic form allowed to vary.

For Maass cusp forms on $\mathrm{SL}_2(\mathbb Z)$,
Hejhal--Luo \cite{HL} established a spectral analogue under the
generalized Riemann hypothesis (GRH).  Later, Liu--Liu \cite{LL}
gave the $\mathrm{SL}_3(\mathbb Z)$ counterpart,
and Chen--Lau--Wang \cite{CLW} considered the 
$\mathrm{GL}_n$  ($n\geq 3$) family setting.
These results address averages over varying archimedean parameters.
In the present paper we study spherical $\mathrm{GL}_3$
cuspidal newforms as their prime level tends to infinity.
Let $N$ be an odd prime. We write
\[
 \Gamma_0^{(3)}(N)=\left\{\gamma=(\gamma_{ij})_{1\le i,j\le 3}\in\mathrm{SL}_3(\mathbb Z):
 \gamma_{31}\equiv\gamma_{32}\equiv0\bmod N\right\},
\]
whose index is
\[
P_N:=[\mathrm{SL}_3(\mathbb Z):\Gamma_0^{(3)}(N)]
=N^2+N+1.
\]
Let $\mathcal B(N)$ denote an orthogonal basis of spherical cuspidal
newforms of level $N$ with trivial central character, or equivalently
the corresponding cuspidal automorphic representations of
$\mathrm{PGL}_3(\mathbb A_{\mathbb Q})$.
The precise normalization is given in Section~\ref{L-function}.
For $\pi\in\mathcal B(N)$, its standard $L$-function is
\[
 L(s,\pi)=\sum_{n=1}^{\infty}\frac{A_\pi(1,n)}{n^s},
 \qquad \Re(s)>1.
\] 
Write $\widetilde\pi$ for the contragredient of $\pi$.
The adjoint $L$-function in our spectral weights is defined by
\[
 L(s,\pi,\Ad)=\zeta(s)^{-1}L(s,\pi\times\widetilde\pi),
 \qquad\Re(s)>1,
\]
and its value at $s=1$ satisfies
\[
 L(1,\pi,\Ad)=\operatorname*{Res}_{s=1}L(s,\pi\times\widetilde\pi).
\]

For real $t$, we define
\[
 S(t,\pi)=\frac1\pi\arg L\!\left(\frac12+\i t,\pi\right),
\]
where the argument is continued along the horizontal line from
$+\infty+\i t$, where its limiting value equals $0$.  
If $1/2+\i t$ is a zero of $L(s,\pi)$, we define
$S(t,\pi)$ as the average of its two one-sided limits in $t$.
Under GRH for the standard
$L$-functions in this family, 
we 
obtain the following asymptotic formula.
\begin{theorem}\label{main-theorem}
Let $t>0$ and $n\in\mathbb N$ be fixed. Choose $A_0$ sufficiently large,
and fix  $h\in\widetilde{\mathcal H}(A_0)$ (see Section \ref{subsec:test-functions}), independently of $N$,
nonnegative on the automorphic spectrum and with $H_h>0$ (defined in \eqref{H_h}).
Put $$W_N=\sum_{\pi\in\cB(N)}\frac{h(\mu_\pi)}{L(1,\pi,\Ad)}.$$
Assume GRH.
For sufficiently large primes $N$, we have
\begin{equation*}
\begin{split}
\frac{1}{W_N}\sum_{\pi\in\cB(N)}
\frac{h(\mu_\pi)S(t,\pi)^n}{L(1,\pi,\Ad)}=
C_n(\log\log N)^{n/2}+O_{t,n,h}\big((\log\log N)^{(n-1)/2}\big),
\end{split}
\end{equation*}
where $C_n$ is defined by
\bna
C_n=\begin{cases}
\frac{n!}{(n/2)!(2\pi)^{n}}, \,&\textit{if }\, n\,\, \text{is even},\\
0, \,& \textit{if }\, n \,\,\text{is odd}.
\end{cases}
\ena
\end{theorem}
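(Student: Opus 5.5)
We follow Selberg's method, adapted to the level aspect as in Hejhal--Luo, and use the weighted orthogonality formula coming from the prime level Kuznetsov formula as the harmonic-analytic input.

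\emph{Step 1: approximate $S(t,\pi)$ by a short prime sum.} Under GRH for $L(s,\pi)$ we use Selberg's explicit formula for $\log L$. Fix $x=N^{\delta}$ with a small $\delta>0$, depending on $n$ and the range allowed by the orthogonality formula. This gives
\[
S(t,\pi)=-\frac1\pi\Im\sum_{p\le x}\frac{\lambda_\pi(p)}{p^{1/2+\i t}}+R(t,\pi).
\]
Here the prime-square terms, which involve $A_\pi(1,p^2)$ and $A_\pi(p,1)$, contribute $O(1)$ on average. The remainder is controlled by sums of the shape $\sum_{p\le x^3}\Lambda_\pi(p)p^{-1/2-\i t}w(p)$ together with a term proportional to $\frac{\log N}{\log x}\,\sigma_x$-type quantities. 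Because the conductor is $\asymp N^2$, these quantities are $O(1/\delta)$ after using the Selberg-type bound for the distance to zeros. The key point is that, under GRH, every error term is a Dirichlet polynomial of length $x^{O(1)}$ in the coefficients $\lambda_\pi(p)$ and $\lambda_\pi(p^2)$, plus bounded terms.

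\emph{Step 2: moments of the main Dirichlet polynomial.} Write $M(\pi)=\Im\sum_{p\le x}\lambda_\pi(p)p^{-1/2-\i t}$ and expand $M^n$ as a sum over $n$-tuples of primes, with coefficients $\lambda_\pi(p)$ or $\overline{\lambda_\pi(p)}=A_\pi(p,1)$. Each product $\prod\lambda_\pi(p_i)\prod\overline{\lambda_\pi(q_j)}$ is expanded by the Hecke relations into a combination of $A_\pi(m_1,m_2)$. We then apply the weighted orthogonality formula, namely the Kuznetsov formula of prime level with test function $h\in\widetilde{\mathcal H}(A_0)$:
\[
\frac1{W_N}\sum_{\pi}\frac{h(\mu_\pi)}{L(1,\pi,\Ad)}A_\pi(m_1,m_2)\overline{A_\pi(m_1',m_2')}=\delta_{(m_1,m_2)=(m_1',m_2')}+O(N^{-\eta}(m_1m_2m_1'm_2')^{B}).
\]
Since $x^{n}\le N^{n\delta}$ with $\delta$ small, the error term is negligible. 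The diagonal terms reduce to the Gaussian combinatorics exactly as in Selberg's argument: only perfect pairings $p_i=q_j$ survive at leading order, and $\sum_{p\le x}\frac{1}{2p}\cdot\ldots$ gives variance $\frac{1}{2\pi^2}\log\log x\cdot\frac{1}{1}$, hence $\frac{1}{2\pi^2}\log\log N+O(1)$. Non-paired diagonal configurations, such as repeated primes, cost a factor $(\log\log N)^{-1}$. This yields $C_n(\log\log N)^{n/2}+O((\log\log N)^{n/2-1})$, and odd moments come only from non-paired terms.

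\emph{Step 3: combine.} We bound the weighted $2k$-th moments of $R(t,\pi)$ by $O_k(1)$, again using the orthogonality formula on the Dirichlet-polynomial parts of $R$. The bounded pieces of $R$ need care because they involve $\sigma_x$, which must be dominated by a prime sum through Selberg's inequality. Then $S^n=(M/\pi\cdot(-1)+R)^n$, and by the binomial expansion and H\"older's inequality, with the weights nonnegative by hypothesis, the cross terms are $O((\log\log N)^{(n-1)/2})$.

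\emph{Main obstacle.} The hardest step is making the orthogonality formula valid for products of length up to $x^{cn}$ with a power-saving error uniform in the level. This requires controlling the Kloosterman terms in the prime-level $\mathrm{GL}_3$ Kuznetsov formula, and it fixes how small $\delta$ must be. Handling the $R$-term moments on $\mathrm{GL}_3$, where $\lambda_\pi(p)$ is not real and prime squares involve $A_\pi(1,p^2)-A_\pi(p,1)$, is secondary but tedious.
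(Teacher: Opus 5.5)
Your architecture matches the paper's: a GRH explicit formula, a short prime polynomial whose moments come from the Hecke relations and the prime-level orthogonality estimate (Lemma~\ref{lem:new-orth}), a bounded weighted $2n$-th moment for the remainder, and H\"older. Your main term has the wrong sign; in fact $S(t,\pi)\approx+\frac1\pi\Im\sum_p\lambda_\pi(p)p^{-1/2-\i t}$, but this is harmless. However, two claims in your remainder analysis are false as stated, and each needs an extra argument.

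\emph{The conductor term.} The smoothing error is $O(\log\mathcal Q_\pi(t)/\log x)$ with the \emph{analytic} conductor $\mathcal Q_\pi(t)=N\prod_{j=1}^3(2+|t-\Im(\mu_j)|)$. The arithmetic conductor is $N$, not $N^2$, but the real issue is the archimedean part. Since $\cB(N)$ contains forms with arbitrarily large $\|\mu_\pi\|$, this term is not $O(1/\delta)$ uniformly in $\pi$. Step~3 must therefore bound its weighted $2n$-th moment. Using only the pointwise decay of $h$ would also require a Weyl law uniform in $N$ together with level-uniform lower bounds for $L(1,\pi,\Ad)$. The paper avoids this in Lemma~\ref{lem:conductor-error-moments}. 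It bounds $(\log\mathcal Q_\pi(t)/\log x)^r\ll_{t,r,\delta}1+\|\mu_\pi\|^2\le 2-\sum_j\mu_j^2$, using $|\Re(\mu_j)|\le5/14$. It then applies Lemma~\ref{lem:new-orth} with $m_1=m_2=n_1=n_2=1$ to the test function $(2-\sum_j\mu_j^2)h(\mu)$. This function still lies in $\widetilde{\mathcal H}(A_0)$ and is nonnegative on the spectrum, so the weighted mass is $\ll_h N^2\asymp W_N$.

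\emph{Higher prime powers.} You treat everything beyond the $p$ and $p^2$ terms as bounded. On $\mathrm{GL}_3$, however, Kim--Sarnak only gives $|C_\pi(p^k)|p^{-k/2}\le 3p^{-k/7}$, which is summable over $p$ only for $k\ge 8$. The terms with $3\le k\le 7$ are therefore not pointwise bounded. This applies both to the smoothed prime-power sum and to the term $(\log x)^{-1}|\sum_{n\le x^2}C_\pi(n)\Lambda_x(n)n^{-\sigma_x-\i t}|$ that replaces the zero sum. These terms must also be averaged. Write $C_\pi(p^k)$ via Newton's identities as a polynomial of degree at most $k$ in $A_\pi(1,p)$ and $\overline{A_\pi(1,p)}$, expand the $2n$-th power, and apply orthogonality. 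The diagonal is $O_n(1)$ because $\sum_p p^{-k/2}<\infty$ for $k\ge3$. The off-diagonal error is negligible because every index product satisfies $Q\le x^{4n}<N$.

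Finally, the Kloosterman terms are not the bottleneck you expect. The moduli are forced to be divisible by $N$, with $D_1D_2\ge N^3$ on the short Weyl elements, so $\cS_4+\cS_5+\cS_6=O(N^{-2})$ whenever $Q\le N^\eta$ with $\eta<2$. The binding error is the maximal Eisenstein new spectrum, $O(N^{1+\eps}Q^{\theta_2+\eps})$. In the $2n$-th moment of the remainder, this term is what forces $\delta<32/(71n)$.
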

This indicates that $|S(t,\pi)|$ has order of magnitude $\sqrt{\log\log N}$ 
on average as $N\to\infty$. 
For fixed $t>0$ and the same fixed test function $h$, we define,
for every Borel set $E\subseteq\mathbb R$,
\[
 \nu_{N,t,h}(E):=\frac1{W_N}\sum_{\pi\in\cB(N)}
 \frac{h(\mu_\pi)}{L(1,\pi,\Ad)}
 \mathbf 1_E\!\left(\frac{S(t,\pi)}{\sqrt{\log\log N}}\right),
\]
where $\mathbf 1_E$ denotes
the indicator of $E$.  

\begin{corollary}\label{cor:gaussian-limit}
Under the assumptions of Theorem~\ref{main-theorem}, with
$A_0$ and $h$ fixed as above, the measures $\nu_{N,t,h}$ converge
weakly, as $N$ tends to infinity through primes, to
\[
 \mathrm{d}\nu(u)=\sqrt{\pi}\,e^{-\pi^2u^2}\,\mathrm{d}u.
\]
This is the Gaussian measure of mean zero and variance
$1/(2\pi^2)$.  In particular, for every real $a<b$,
\[
 \lim_{\substack{N\to\infty\\N\ {\rm prime}}}
\nu_{N,t,h}([a,b])
 =\sqrt{\pi}\int_a^b e^{-\pi^2u^2}\,\mathrm{d}u.
\]
\end{corollary}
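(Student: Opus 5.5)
The corollary follows from Theorem~\ref{main-theorem} by the method of moments. Since $h$ is nonnegative on the automorphic spectrum and $L(1,\pi,\Ad)>0$, each $\nu_{N,t,h}$ is a probability measure on $\mathbb R$; this uses $W_N>0$, which holds for large $N$ because $H_h>0$ and $W_N$ is asymptotic to a positive multiple of $H_h$ by the weighted orthogonality formula. The $n$-th moment of $\nu_{N,t,h}$ is
\[
 \int_{\mathbb R}u^n\,\mathrm d\nu_{N,t,h}(u)=\frac{1}{(\log\log N)^{n/2}}\cdot\frac1{W_N}\sum_{\pi\in\cB(N)}\frac{h(\mu_\pi)S(t,\pi)^n}{L(1,\pi,\Ad)}.
\]
By Theorem~\ref{main-theorem} this equals $C_n+O_{t,n,h}\bigl((\log\log N)^{-1/2}\bigr)$, so it tends to $C_n$ as $N\to\infty$ through primes.

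Next we identify $C_n$ with the moments of $\nu$. For a centered Gaussian of variance $\sigma^2=1/(2\pi^2)$, the odd moments vanish and the moment of order $n=2m$ is
\[
 \frac{(2m)!}{2^m m!}\sigma^{2m}=\frac{(2m)!}{m!\,(2\pi)^{2m}}=C_{2m}.
\]
This density is $(2\pi\sigma^2)^{-1/2}e^{-u^2/(2\sigma^2)}=\sqrt{\pi}\,e^{-\pi^2u^2}$, as claimed.

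The Gaussian is determined by its moments, since $C_{2m}^{1/(2m)}\ll\sqrt m$ and so Carleman's condition holds. By the standard moment convergence theorem (Fréchet--Shohat), $\nu_{N,t,h}\to\nu$ weakly.

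For the statement about intervals, $\nu$ is absolutely continuous, so $\nu(\partial[a,b])=0$. The portmanteau theorem then gives $\nu_{N,t,h}([a,b])\to\nu([a,b])$.

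No step is a real obstacle. The only points to check are positivity of the weights, so that $\nu_{N,t,h}$ is a genuine probability measure, and uniqueness of the Gaussian in the moment problem.
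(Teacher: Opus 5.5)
Your proposal is correct and follows the paper's proof: the paper also reads off the $n$-th moments of $\nu_{N,t,h}$ from Theorem~\ref{main-theorem}, identifies their limits $C_n$ with the Gaussian moments, and concludes by the method of moments (Billingsley, Theorem~30.2) together with the continuity of the Gaussian distribution function. The paper makes explicit one point that you use only tacitly. A single $A_0$ and a single $h$ must serve for every moment order $n$; this holds because Lemma~\ref{lem:new-orth} is only ever applied with $\eta=1$ (Remark~\ref{rem:A0}), and it is what lets you invoke Theorem~\ref{main-theorem} for all $n$ with the same fixed weights.
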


We briefly contrast our result with unconditional work on other families:
 Liu--Shim \cite{LS} treated the holomorphic weight aspect;
Sun--Wang \cite{SWlevel, SWspectral, SWGL3} obtained analogous results for
holomorphic forms of weight $2$ in the prime level aspect, for level $1$ even $\mathrm{GL}_2$ Maass
cusp forms in the spectral aspect, and for level $1$
$\mathrm{GL}_3$ forms with generic spectral parameters, respectively; Sun--Wang--Yu \cite{SWY} considered
Dirichlet twists of a fixed holomorphic cusp form.
Note that those works rely on Selberg-type zero-density estimates tailored to their respective families. 
Our theorem concerns spherical $\mathrm{GL}_3$ cuspidal newforms of
prime level, averaged with a fixed spectral test function.
The GRH assumption supplies the control of zeros needed in the
approximation to $S(t,\pi)$.
A natural next step is therefore to establish an appropriate weighted 
zero-density estimate for this prime level $\mathrm{GL}_3$ family, 
with the goal of removing the GRH assumption from Theorem~\ref{main-theorem}, 
and we are currently pursuing this direction.
\\ \

\noindent{\bf Sketch of proof of Theorem \ref{main-theorem}.} 
The main input is the weighted orthogonality estimate for newforms
in Lemma~\ref{lem:new-orth}, derived from the Kuznetsov
formula of prime level in Blomer--Comtat \cite[Lemma~16]{BC}.
With the spectral test function fixed independently of $N$, we
estimate the geometric terms and separate the oldforms and the
continuous spectrum, uniformly in the level and the four Hecke
indices. The divisibility conditions on the Kloosterman moduli,
together with the corrected transform bounds in
\cite[Section~2.5]{BLv2}, make the Weyl terms negligible in the
required range. Coprimality of the indices to $N$ simplifies the
oldform coefficients, while the maximal Eisenstein spectrum
requires a uniform count of the inducing $\mathrm{GL}_2$ newforms
of level $N$. In the normalized formula, this latter contribution is
$O_{h,\varepsilon}(N^{-1+\varepsilon}Q^{7/64+\varepsilon})$,
where $Q$ is the product of the four Hecke indices, and determines
an admissible length for the prime sums.

The orthogonality estimate in Lemma~\ref{lem:new-orth} is
unconditional. The results in \cite{LL,CLW,SWGL3} are obtained
in the spectral aspect at fixed level and do not directly yield
the estimates uniform in the prime level $N$ required here.
These estimates are established in
Section~\ref{sec:proof-new-orth}, with a spectral test function
$h$ fixed independently of both $N$ and the moment order;
see Remark~\ref{rem:A0}.

Under GRH, the smoothed explicit formula in
Section~\ref{An approximation of S} allows us to approximate
$S(t,\pi)$ by the prime sum $\mathcal M(t,\pi)$ in \eqref{MR def},
whose length is chosen sufficiently short in terms of the fixed
moment order $n$. The Hecke relations and Lemma~\ref{lem:new-orth}
evaluate its weighted $n$-th moment. For even $n$, the main term
comes from distinct primes, each occurring once with its
coefficient and once with its conjugate, yielding the constant
$C_n$; all other patterns contribute to the error term.
We retain the full analytic conductor in the approximation and
control its contribution by Lemma~\ref{lem:conductor-error-moments}.
Together with the estimates for higher prime powers and smoothing
terms, this gives \eqref{eq:remainder-moment}: the normalized
weighted $2n$-th absolute moment of the remainder
$\mathcal R(t,\pi)$ is bounded uniformly in $N$.
H\"older's inequality then bounds the mixed terms by
$O_{h,t,n}((\log\log N)^{(n-1)/2})$, proving the moment asymptotic
for $S(t,\pi)$. Since the same test function $h$ is used for all
fixed moment orders and the Gaussian measure is determined by
its moments, the Gaussian limit follows by the method of moments.
\\ \

\noindent{\bf Structure of this paper.} This paper is organized as follows.
In Section~\ref{L-function}, we recall the Fourier coefficient normalizations
and the standard $L$-function of the prime level $\mathrm{GL}(3)$ forms in our family.
In Section~\ref{Kuznetsov formula}, we state the prime level Kuznetsov formula
and derive the weighted orthogonality estimate for short Hecke indices.
In Section~\ref{An approximation of S}, we establish the
smoothed approximation of $S(t,\pi)$ under the GRH.
In Section~\ref{proof of the main th}, we prove Theorem~\ref{main-theorem}.
The main contribution comes from the short prime sum $\mathcal M(t,\pi)$,
whose moments are evaluated using the orthogonality estimate and the Hecke relations.
We then bound the moments of the remainder $\mathcal R(t,\pi)$ and use
H\"older's inequality to absorb the mixed terms into the error term.
Finally, in Section~\ref{sec:proof-new-orth}, we prove the orthogonality
estimate in Lemma~\ref{lem:new-orth} from the Kuznetsov trace formula.
\\ \

\noindent{\bf Notation.} Throughout the paper,
$\varepsilon$ and $A$ denote arbitrarily small and arbitrarily large
positive real numbers, respectively, and may differ at each occurrence.
As usual, $e(z) = \exp (2 \pi \i z) = \textrm{e}^{2 \pi \i z}$.
We write $f = O(g)$ or $f \ll g$ to mean $|f| \leq cg$ for some unspecified positive constant $c$.
The symbol $f \asymp g$ means that $f=O(g)$ and $g=O(f)$.

\section{Standard $\rm GL(3)$ $L$-functions}\label{L-function}
Throughout the paper, $N$ denotes an odd prime.
For $\pi\in\mathcal B(N)$, write
\[
\pi=\sideset{}{'}\bigotimes_v \pi_v
\]
and denote its archimedean spectral parameter, in the convention of
Blomer--Comtat \cite{BC}, by
\[
\mu_\pi=(\mu_1,\mu_2,\mu_3),
\qquad
\mu_1+\mu_2+\mu_3=0.
\]
We assume throughout that $\pi_\infty$ is spherical.
We first fix the Fourier--Whittaker and Hecke normalizations at finite
primes, and then assemble the standard $L$-function.

\subsection{Fourier coefficients and Hecke normalization}

We normalize the Fourier--Whittaker coefficients of $\pi$ by
$A_\pi(1,1)=1$.  For every prime $q$, including the ramified prime $q=N$, express
$A_\pi(q^k,q^\ell)$ in terms of the Satake parameters
$\alpha_q,\beta_q,\gamma_q$ of $\pi$ as
\begin{equation}
A_\pi(q^k,q^\ell)
=\det
\begin{pmatrix}
\alpha_q^{k+\ell+2} & \beta_q^{k+\ell+2} & \gamma_q^{k+\ell+2}\\
\alpha_q^{k+1} & \beta_q^{k+1} & \gamma_q^{k+1}\\
1&1&1
\end{pmatrix}/
\det
\begin{pmatrix}
\alpha_q^2&\beta_q^2&\gamma_q^2\\
\alpha_q&\beta_q&\gamma_q\\
1&1&1
\end{pmatrix}.
\label{eq:local-Schur}
\end{equation}
This quotient is the Schur polynomial $s_{(k+\ell,k,0)}$ and is
interpreted by polynomial continuation when local parameters coincide.
The coefficients $A_\pi(m,n)$ are also multiplicative.

Suppose first that $q\nmid N$.  Then $\pi_q$ is unramified, and
\[
\alpha_q\beta_q\gamma_q=1,\qquad\quad
\{\alpha_q^{-1},\beta_q^{-1},\gamma_q^{-1}\}
=
\{\overline{\alpha_q},
  \overline{\beta_q},
  \overline{\gamma_q}\}.
\]
In addition, the Kim--Sarnak bound \cite{KS} toward the Ramanujan conjecture gives
\begin{equation}\label{KS bound}
|\alpha_q|,|\beta_q|,|\gamma_q|
\le q^{\theta_3},
\qquad
\theta_3=5/14.
\end{equation}
The Schur formula \eqref{eq:local-Schur} gives the first two Hecke
coefficients explicitly
\begin{align*}
A_\pi(1,q)
&=\alpha_q+\beta_q+\gamma_q,\\
A_\pi(q,1)
&=\alpha_q\beta_q+\alpha_q\gamma_q+\beta_q\gamma_q.
\end{align*}
Consequently, the local Euler factor satisfies
\begin{equation*}
L_q(s,\pi)^{-1}
=
1-A_\pi(1,q)q^{-s}
+A_\pi(q,1)q^{-2s}
-q^{-3s}.
\end{equation*}

For $(mn,N)=1$, unitarity also gives
\[
A_\pi(m,n)
=
\overline{A_\pi(n,m)}.
\]
Further, we have
\[
A_{\widetilde\pi}(m,n)=\overline{A_\pi(m,n)}
\quad\text{for all }m,n\geq1,
\qquad
A_{\widetilde\pi}(m,n)=A_\pi(n,m)
\quad\text{if }(mn,N)=1.
\]

At the ramified prime $N$, the coefficients are still given by
\eqref{eq:local-Schur}, but the local parameters take a different
form.  By Blomer--Comtat \cite[Lemma 1]{BC}, there is an imaginary number 
$\rho\in\i\mathbb R$ such that
\begin{equation*}
\alpha_N=N^{-1/2-\rho},
\qquad
\beta_N=N^{2\rho},
\qquad
\gamma_N=0.
\label{eq:ramified-Satake}
\end{equation*}
Substituting these parameters into \eqref{eq:local-Schur} gives
$A_\pi(N^j,1)=N^{-j(1/2-\rho)}$ and
$A_\pi(N^j,N^i)=A_\pi(N^j,1)A_\pi(1,N^i)$ for $i,j\geq0$.
Indeed, with $\gamma_N=0$ the Schur polynomial gives
$A_\pi(N^j,1)=(\alpha_N\beta_N)^j$, and
$\alpha_N\beta_N=N^{-1/2+\rho}$.
The bound in \eqref{KS bound} also holds at $N$ with $\theta_3=0$.
Since $\gamma_N=0$, the local Euler factor has two factors
\begin{equation}
L_N(s,\pi)^{-1}=(1-N^{-1/2-\rho-s})(1-N^{2\rho-s}).
\label{eq:ramified-Euler}
\end{equation}
\subsection{The standard $L$-function}

For $\Re(s)>1$, we define the standard $L$-function associated to $\pi$ by
\begin{equation*}\label{eq:standard L-function}
\begin{split}
L(s,\pi)=\sum_{n=1}^{\infty}\frac{A_\pi(1,n)}{n^s}.
\end{split}
\end{equation*}
It has the Euler product
\bna
L(s,\pi)=(1-N^{-1/2-\rho-s})^{-1}(1-N^{2\rho-s})^{-1}
\prod_{q\ne N}(1-A_\pi(1,q)q^{-s}
+A_\pi(q,1)q^{-2s}-q^{-3s})^{-1}.
\ena
Correspondingly,
the contragredient $\widetilde{\pi}$ has conjugate Dirichlet coefficients and local
parameters. For $\Re(s)>1$, we have
\begin{equation*}
\begin{split}
L(s,\widetilde\pi)&=\sum_{n=1}^{\infty}\frac{\overline{A_\pi(1,n)}}{n^s}\\
&=(1-N^{-1/2+\rho-s})^{-1}(1-N^{-2\rho-s})^{-1}\prod_{q\ne N}(1-\overline{A_\pi(1,q)}q^{-s}
+\overline{A_\pi(q,1)}q^{-2s}-q^{-3s})^{-1}.
  \end{split}
\end{equation*}
In particular, $\overline{A_\pi(1,n)}=A_\pi(n,1)$ when $(n,N)=1$;
at $N$, one must use the displayed dual factor instead.
Moreover, the completed $L$-function of $\pi$ is
\begin{equation}
\Lambda(s,\pi)
=N^{s/2}L_\infty(s,\pi)L(s,\pi),
\label{eq:completed-L}
\end{equation}
which admits an entire continuation to $s\in \mathbb{C}$ and satisfies the functional equation
\begin{equation}
\Lambda(s,\pi)
=\varepsilon(\pi)
\Lambda(1-s,\widetilde\pi).
\label{eq:functional-equation}
\end{equation}
Here $\varepsilon(\pi)$ is the root number of $L(s,\pi)$ with 
$|\varepsilon(\pi)|=1$, and the archimedean factor is 
\begin{equation*}
L_\infty(s,\pi)
=\pi^{-3s/2}\prod_{j=1}^{3}
\Gamma\!\left(\frac{s-\mu_j}{2}\right).
\label{eq:arch-factor}
\end{equation*}
The contragredient $\widetilde\pi$ has 
archimedean parameters $(-\mu_1,-\mu_2,-\mu_3)$, so
\[
L_\infty(s,\widetilde\pi)
=\pi^{-3s/2}\prod_{j=1}^{3}
\Gamma\!\left(\frac{s+\mu_j}{2}\right).
\]
For the general analytic properties of standard automorphic
$L$-functions, see Iwaniec--Kowalski \cite[Section~5.12]{IK}.

\section{The prime-level $\mathrm{GL}(3)$ Kuznetsov formula}
\label{Kuznetsov formula}

In this section we record the Kuznetsov trace formula for
$\Gamma_0^{(3)}(N)$, with $N$ prime.  We use the spectral parameter
$\mu_\pi$ and the kernel conventions of Blomer--Comtat \cite[Section~5]{BC}.
In particular, the same test function $h$ occurs on
the spectral side and in the integral transforms.
We use the probability measure on
\[
\Gamma_0^{(3)}(N)\backslash
\mathrm{SL}_3(\mathbb R)/\mathrm{SO}_3(\mathbb R).
\]
With this normalization the geometric side of the Kuznetsov formula
carries the factor $P_N$.

Let
\[
\mathfrak a_{\mathbb C}^*
=
\left\{
\mu=(\mu_1,\mu_2,\mu_3)\in\mathbb C^3:
\mu_1+\mu_2+\mu_3=0
\right\}.
\]
For $
\mu=(\mu_1,\mu_2,\mu_3)\in\mathfrak a_{\mathbb C}^*,$
we write
$
\int_{\Re(\mu)=0}
$
for integration over the tempered plane
\[
\Re(\mu_1)=\Re(\mu_2)=\Re(\mu_3)=0,
\qquad
\mu_1+\mu_2+\mu_3=0.
\]
\subsection{Test functions}\label{subsec:test-functions}
For $A_0>0$, let $\widetilde{\mathcal H}(A_0)$ denote the class of test
functions
$
h:\mathfrak a_{\mathbb C}^*\rightarrow\mathbb C
$
satisfying the following conditions:
\begin{enumerate}
\item $h$ is invariant under the Weyl group $\mathcal{W}\simeq S_3$;

\item $h$ is holomorphic for $|\Re(\mu_j)|\le A_0$, $j=1,2,3$;

\item for every $A>0$, $h(\mu)=O_{A,h}(\|\mu\|^{-A})$
as $\|\mu\|\to\infty$, uniformly in this strip;

\item $h$ vanishes whenever
\begin{equation*}
\mu_i-\mu_j=n\equiv1\bmod2,
\qquad
|n|\le A_0,
\qquad
1\le i<j\le3.
\end{equation*}
\end{enumerate}

This is the class in \cite[Definition~1]{BC}.  In the level-aspect
averages, $h$ is fixed independently of $N$, nonnegative on the
automorphic spectrum, and satisfies $H_h>0$.
The square of the function in \cite[Eq.\,(5.4)]{BC}
provides an example, since that function is real-valued on the
automorphic spectrum and is not identically zero on the tempered plane.

\subsection{Spectral density and Kuznetsov kernels}

Define the spectral density by
\begin{equation*}
\operatorname{spec}(\mu)
=\prod_{1\leq i<j\leq3}
(\mu_i-\mu_j)
\tan\!\left(\frac{\pi}{2}(\mu_i-\mu_j)\right).
\end{equation*}
In the diagonal spectral integral, we use
$\mathrm{d}\mu=\mathrm{d}\mu_1\,\mathrm{d}\mu_2$ with both imaginary
contours oriented upward.  Thus, if $\mu_j=\i t_j$ and
$t_3=-t_1-t_2$, then $\mathrm{d}\mu=-\mathrm{d}t_1\,\mathrm{d}t_2$;
more explicitly,
\[
\operatorname{spec}(\mu)\,\mathrm{d}\mu
=\prod_{1\leq i<j\leq3}
(t_i-t_j)\tanh\!\left(\frac{\pi}{2}(t_i-t_j)\right)
\,\mathrm{d}t_1\,\mathrm{d}t_2\geq0.
\]
Let $\epsilon_1,\epsilon_2\in\{\pm1\}.$
For $s\in\mathbb C$ and
$\mu\in\mathfrak a_{\mathbb C}^*$, put
\begin{equation*}
\begin{aligned}
\widetilde G^{\pm}(s,\mu)
:=\frac{\pi^{-3s}}{12288\pi^{7/2}}
\Bigg(
\prod_{j=1}^{3}
\frac{
\Gamma\left(\frac{s-\mu_j}{2}\right)}
{
\Gamma\left(\frac{1-s+\mu_j}{2}\right)}
\pm \i
\prod_{j=1}^{3}
\frac{
\Gamma\left(\frac{1+s-\mu_j}{2}\right)}
{
\Gamma\left(\frac{2-s+\mu_j}{2}\right)}
\Bigg).
\end{aligned}
\end{equation*}
For $s=(s_1,s_2)$ define
\begin{equation*}
\begin{aligned}
G_{\mathrm{sym}}^{\epsilon_1,\epsilon_2}(s,\mu)
:={}&
\frac{1}{1024\pi^{5/2}}
\sum_{\substack{
d_1,d_2,d_3\in\{0,1\}\\
d_3\equiv d_1+d_2\bmod 2
}}
\epsilon_1^{d_1}
\epsilon_2^{d_2}
(-1)^{d_1d_2}
\frac{
\Gamma\left(
\frac{1+d_3-s_1-s_2}{2}
\right)}
{
\Gamma\left(
\frac{d_3+s_1+s_2}{2}
\right)}
\\
&\times
\prod_{j=1}^{3}
\frac{
\Gamma\left(
\frac{d_1+s_1-\mu_j}{2}
\right)
\Gamma\left(
\frac{d_2+s_2+\mu_j}{2}
\right)}
{\Gamma\left(
\frac{1+d_1-s_1+\mu_j}{2}
\right)
\Gamma\left(
\frac{1+d_2-s_2-\mu_j}{2}
\right)}.
\end{aligned}
\end{equation*}
The Mellin--Barnes integrals below have the contour prescription
of Blomer--Leung \cite[Section~2.4]{BLv2}: contours are deformed to separate
the gamma poles and interpreted by analytic continuation where
necessary.  The notation $\int_{-\i\infty}^{\i\infty}$ does not
prescribe integrating through a gamma pole.
For $y\in\mathbb R^\times$, define the short-Weyl kernel
\begin{equation*}
K_{w_4}(y;\mu)
=\int_{-\i\infty}^{\i\infty}
|y|^{-s}
\widetilde G^{\operatorname{sgn}(y)}(s,\mu)
\frac{\mathrm{d}s}{2\pi \i}.
\end{equation*}
For $y=(y_1,y_2)\in(\mathbb R^\times)^2$, 
define the long-Weyl kernel
\begin{equation*}
\begin{aligned}
K_{w_6}^{\mathrm{sym}}(y;\mu)
={}&
\int_{-\i\infty}^{\i\infty}
\int_{-\i\infty}^{\i\infty}
(\pi^2|y_1|)^{-s_1}
(\pi^2|y_2|)^{-s_2}
G_{\mathrm{sym}}^{
\operatorname{sgn}(y_1),
\operatorname{sgn}(y_2)}
(s,\mu)
\frac{\mathrm{d}s_1\,\mathrm{d}s_2}{(2\pi \i)^2}.
\end{aligned}
\end{equation*}
For an admissible test function $h\in\widetilde{\mathcal H}(A_0)$,
define the integral transforms by
\begin{align*}
\Phi_{w_4}(y)
&=
\int_{\Re(\mu)=0}
h(\mu)
K_{w_4}(y;\mu)
\operatorname{spec}(\mu)\,\mathrm{d}\mu,
\\
\Phi_{w_5}(y)
&=
\int_{\Re(\mu)=0}
h(\mu)
K_{w_4}(-y;-\mu)
\operatorname{spec}(\mu)\,\mathrm{d}\mu,
\\
\Phi_{w_6}(y_1,y_2)
&=
\int_{\Re(\mu)=0}
h(\mu)
K_{w_6}^{\mathrm{sym}}
((y_1,y_2);\mu)
\operatorname{spec}(\mu)\,\mathrm{d}\mu.
\end{align*}

We record the following bounds for the integral transforms,
as given in the revised arXiv version of Blomer--Leung
\cite[Section~2.5, Lemmas~2.4, 2.5, and 2.6(a)]{BLv2}.
\begin{lemma}
Given $A,\varepsilon>0$ and
$j,j_1,j_2\in\mathbb N_0$, there is a choice of $A_0$ 
such that for $h\in \widetilde{\mathcal H}(A_0)$, we have
\begin{equation}
\label{eq:Phi-w4-bound}
|y|^j\Phi_{w_4}^{(j)}(y),
\quad
|y|^j\Phi_{w_5}^{(j)}(y)
\ll
|y|^{j/3}
\min\left(
|y|^A,
|y|^{-1/6+\varepsilon}
\right).
\end{equation}
Moreover, the derivatives satisfy
\[
\begin{aligned}
y_1^{j_1}y_2^{j_2}
 \frac{\partial^{j_1+j_2}}{\partial y_1^{j_1}\partial y_2^{j_2}}
 \Phi_{w_6}(y_1,y_2)
\ll
 (1+|y_1|^{1/2}+|y_1|^{1/3}|y_2|^{1/6})^{j_1}
 (1+|y_2|^{1/2}+|y_2|^{1/3}|y_1|^{1/6})^{j_2}.
\end{aligned}
\]
For the undifferentiated transform, with the prescribed $A,\varepsilon$, we have
\begin{equation}\label{eq:Phi-w6-small-bound}
 |\Phi_{w_6}(y_1,y_2)|
 \ll_{h,A,\varepsilon}
 \min\{|y_1^2y_2|,|y_1y_2^2|\}^{A}
 (1+|y_1|+|y_2|)^\varepsilon.
\end{equation}
Finally,
\begin{equation*}
\Phi_{w_6}(y_1,y_2)
\ll
\left(
2+|\log|y_1||+|\log|y_2||
\right)^3
(1+|y_1|)^{-1/4}
(1+|y_2|)^{-1/4}.
\end{equation*}
The implied constants may depend on
$h,A,j,j_1,j_2$ and $\varepsilon$.
\end{lemma}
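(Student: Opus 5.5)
The bounds in this lemma are transcribed from Blomer--Leung \cite[Section~2.5]{BLv2}. My plan is therefore to check that our kernels and transforms agree with theirs, and then to sketch why each bound holds, by contour shifting and stationary phase.

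\textbf{Step 1: matching conventions.} First I will check that $\widetilde G^{\pm}$, $G^{\epsilon_1,\epsilon_2}_{\mathrm{sym}}$, the measure $\operatorname{spec}(\mu)\,\mathrm d\mu$ and the class $\widetilde{\mathcal H}(A_0)$ coincide with those of \cite{BC,BLv2}, including the normalizing constants and the sign conventions for $w_5$. Constants do not affect the bounds, so only the shape of the Mellin--Barnes integrands matters. The vanishing condition (4) on $h$ cancels the poles of $\tan(\frac{\pi}{2}(\mu_i-\mu_j))$ and of the gamma ratios when $\mu$ is shifted off the tempered plane, up to height $A_0$. This is what makes contour shifts in $\mu$ legitimate.

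\textbf{Step 2: small-argument bounds.}
\begin{itemize}
\item For $|y|\le1$ (respectively for $\min\{|y_1^2y_2|,|y_1y_2^2|\}$ small), I will interchange the $\mu$- and $s$-integrals. Then I shift the $s$-contours to the right, to $\Re s=A$ (respectively $\Re s_1,\Re s_2$ suitably large), while simultaneously moving $\mu$ so that no gamma poles are crossed.
\item The rapid decay of $h$ absorbs the polynomial growth of the gamma ratios, which by Stirling is roughly $|s|^{3\Re s-3/2}$ per product.
\item This gives $|y|^{A}$, and $\min\{\cdots\}^A$ for $\Phi_{w_6}$. The choice of $A_0$ in terms of $A$ enters exactly here, since the shift in $\mu$ is limited by the holomorphy strip.
\item Derivatives in $y$ only bring down powers of $s$, which are harmless.
\end{itemize}

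\textbf{Step 3: large-argument bounds.} For $|y|\ge1$ I shift $s$ to $\Re s=1/2$ or slightly less and use Stirling.
\begin{itemize}
\item For $w_4$, the integrand $|y|^{-s}\widetilde G$ has phase $\approx 3t\log|t| - t\log|y|$ in $t=\Im s$. This phase is stationary at $|t|\asymp|y|^{1/3}$.
\item Integrating over $\mu$ in a fixed compact region (by the decay of $h$), a stationary-phase and second-derivative analysis gives $|y|^{-1/6+\varepsilon}$.
\item Each $y\,\partial_y$ costs a factor $|t|\asymp|y|^{1/3}$, which explains the $|y|^{j/3}$ loss.
\item For $w_6$, the analogous two-variable phase has stationary points at $|t_1|\asymp|y_1|^{1/2}+|y_1|^{1/3}|y_2|^{1/6}$, and symmetrically for $t_2$. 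This yields the stated derivative bounds.
\item The final bound, with decay $(1+|y_1|)^{-1/4}(1+|y_2|)^{-1/4}$ and the $\log^3$ factor, follows from a Hessian (second-derivative) estimate. The logarithms arise from the transition regions where one variable is small and the gamma factor $\Gamma((1+d_3-s_1-s_2)/2)/\Gamma((d_3+s_1+s_2)/2)$ is near its poles. There I would cut the integrals dyadically and count $O(\log)$ ranges in each of three directions.
\end{itemize}

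\textbf{Main obstacle.} The hard part is the $w_6$ bounds: both the mixed derivative estimates and the uniform $(1+|y_1|)^{-1/4}(1+|y_2|)^{-1/4}$ decay. The two-dimensional oscillatory integral has degenerate stationary points when $|y_1|^2\asymp|y_2|$ or vice versa. This is precisely where \cite{BLv2} corrected the earlier version. In practice I would not redo this analysis: I would cite \cite[Lemmas~2.4, 2.5, 2.6(a)]{BLv2} after the verification in Step~1, and only supply the small-argument bound \eqref{eq:Phi-w6-small-bound} by the contour shift of Step~2.
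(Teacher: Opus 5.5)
\textbf{Citation part.} The paper gives no argument for this lemma. It records the bounds from \cite[Section~2.5, Lemmas~2.4, 2.5, 2.6(a)]{BLv2}, so your Step~1 followed by that citation is exactly the paper's route, and your Step~3 is only heuristic.

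\textbf{The gap: Step~2.} The problem is the one piece you propose to prove yourself, namely the small-argument bounds (in particular \eqref{eq:Phi-w6-small-bound}) via Step~2. As written, that step does not work. On $\Re s=\sigma$ one has $|\,|y|^{-s}|=|y|^{-\sigma}$, so moving to $\Re s=A$ produces $|y|^{-A}$, which blows up as $y\to0$. Worse, Stirling gives $\widetilde G^{\pm}(\sigma+it,\mu)\asymp|t|^{3\sigma-3/2}$ for $|t|\gg1+\|\mu\|$. Hence the $s$-integral is not even absolutely convergent once $\sigma\geq1/6$, and the decay of $h$ cannot compensate, since it is decay in $\mu$, not in $\Im s$. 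Incidentally, this also shows that the large-$|y|$ bound $|y|^{j/3-1/6+\varepsilon}$ for $w_4,w_5$ needs no stationary phase. Put the contour at $\Re s=1/6-j/3-\varepsilon$, the edge of absolute convergence, and estimate trivially; for $j\geq1$ this picks up finitely many harmless residues.

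\textbf{What the correct argument needs.} The correct move is to the left, to $\Re s=-A$.
\begin{itemize}
\item This necessarily crosses the poles of $\Gamma(\frac{s-\mu_j}{2})$ and $\Gamma(\frac{1+s-\mu_j}{2})$ at $s=\mu_j-2k$ and $s=\mu_j-1-2k$. You cannot deform $\mu$ to keep them on the left: that would require $\Re\mu_j<-A$ for all three $j$, which is incompatible with $\mu_1+\mu_2+\mu_3=0$.
\item The residue terms carry $|y|^{-\mu_j+2k}$, of modulus $1$ on the tempered plane when $k=0$. Each must be treated by shifting its own $\mu$-contour, say to $\Re\mu_j=-A$ with the other two real parts equal to $A/2$; this is where $A_0\geq A$ is used.
\item Before shifting, you must check that these residue terms, as functions of $\mu$, have no singularities in the region swept out. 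This is where condition~(4) and the zeros of $\operatorname{spec}(\mu)$ at even differences $\mu_i-\mu_j$ enter, and it is the delicate bookkeeping carried out in \cite{BLv2}.
\item For $w_6$ the same happens in both variables, with poles at $s_1=\mu_j-d_1-2k$ and $s_2=-\mu_j-d_2-2k$. The shape $\min\{|y_1^2y_2|,|y_1y_2^2|\}^A$ corresponds to the target lines $(\Re s_1,\Re s_2)=(-2A,-A)$ or $(-A,-2A)$.
\item On these lines the factor $\Gamma(\frac{1+d_3-s_1-s_2}{2})/\Gamma(\frac{d_3+s_1+s_2}{2})\asymp|t_1+t_2|^{1/2-\Re(s_1+s_2)}$ leaves an integrand of size only $|t_2|^{-1}$ (respectively $|t_1|^{-1}$) in one direction. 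A further $\varepsilon$-shift is therefore needed, and it costs the factor $(1+|y_1|+|y_2|)^\varepsilon$.
\end{itemize}
None of this appears in your sketch. Either carry it out, or, as the paper does, cite \eqref{eq:Phi-w6-small-bound} from \cite{BLv2} together with the other bounds.
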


\subsection{Kloosterman sums}

\leavevmode
For integers $n_1,n_2,m_1,m_2$ and positive integers $N,D_1,D_2$,
we define two types of Kloosterman sums. When $D_1\mid D_2$, put
\begin{equation*}
\label{eq:tilde-S-def}
\begin{aligned}
\widetilde S(n_1,n_2,m_1;D_1,D_2)
:=
\sum_{\substack{
C_1\;(\mathrm{mod}\ D_1),\
C_2\;(\mathrm{mod}\ D_2)\\
(C_1,D_1)=
(C_2,D_2/D_1)=1
}}
e\left(
n_2\frac{\overline C_1C_2}{D_1}
+
m_1\frac{\overline C_2}{D_2/D_1}
+
n_1\frac{C_1}{D_1}
\right).
\end{aligned}
\end{equation*}
Here $\overline C_1$ denotes the inverse of $C_1$ modulo $D_1$,
whereas $\overline C_2$ denotes the inverse of $C_2$ modulo
$D_2/D_1$.
For $N\mid D_1$ and $N\mid D_2$, we also define
\begin{equation*}
\begin{aligned}
&S^{(N)}(n_1,n_2,m_1,m_2;D_1,D_2)
\\
&:=
\sum_{\substack{
B_1,C_1\;(\mathrm{mod}\ D_1)\\
B_2,C_2\;(\mathrm{mod}\ D_2)\\
D_1C_2+B_1B_2+D_2C_1
\equiv0\;(\mathrm{mod}\ D_1D_2)\\
(B_j,C_j,D_j)=1\ (j=1,2)\\
N\mid B_1
}}
e\left(
\frac{
n_1B_1+
m_1(Y_1D_2-Z_1B_2)
}{D_1}
+
\frac{
n_2B_2+
m_2(Y_2D_1-Z_2B_1)
}{D_2}
\right),
\end{aligned}
\end{equation*}
where $Y_jB_j+Z_jC_j\equiv1\pmod{D_j}$ for $j=1,2$.
For later use, we record two bounds for nonzero integer arguments.
The first, due to Larsen \cite[Appendix, Theorem~1]{BFG}, is
\begin{equation*}
\widetilde S(n_1,n_2,m_1;D_1,D_2)
\ll_\varepsilon
\left(
(m_1,D_2/D_1)D_1^2,\,
(n_1,n_2,D_1)D_2
\right)
(D_1D_2)^\varepsilon.
\end{equation*}
The second, in the form stated by Blomer--Buttcane--Maga
\cite[Lemma~5, Eq.\,(4.1)]{BBM}, is
\begin{equation*}
S^{(N)}(n_1,n_2,m_1,m_2;D_1,D_2)
\ll_\varepsilon
(D_1D_2)^{1/2+\varepsilon}
\bigl(
(D_1,D_2)
(n_1m_1,[D_1,D_2])
(n_2m_2,[D_1,D_2])
\bigr)^{1/2}.
\end{equation*}
Here $[D_1,D_2]$ denotes the least common multiple.

\subsection{The spectral decomposition}


We decompose the spectral side into cuspidal and Eisenstein terms.

\subsubsection{Cuspidal new spectrum}

For $m_1,m_2,n_1,n_2\in\mathbb N$, let $\mathcal B(N)$ be the
basis of spherical cuspidal newforms introduced above, with the
Fourier coefficient conventions of \cite[Section~5.3]{BC}, and set
\begin{equation*}
\label{eq:Cnew}
\mathcal C_{\mathrm{new}}
:=
c_{\mathrm{cusp}}
\sum_{\pi\in\mathcal B(N)}
\frac{h(\mu_\pi)}
{L(1,\pi,\mathrm{Ad})}
\overline{A_\pi(m_1,m_2)}
A_\pi(n_1,n_2),
\end{equation*}
where $c_{\mathrm{cusp}}>0$ is an absolute normalization constant.

\subsubsection{Cuspidal old spectrum}

\leavevmode
For $\pi\in\mathcal B(1)$, let $B_\pi^{(j)}(m,n)$, $j=0,1,2$,
be the oldform coefficients defined in \cite[Eq.~(1.8)]{BC}.
The common $L^2$-normalization factor is separated from these
coefficients and included in the spectral weight below, as in
\cite[Corollary~4 and Section~5.3]{BC}. We put
\begin{equation*}
\mathcal C_{\mathrm{old}}
:=
c_{\mathrm{cusp}}
\sum_{\pi\in\mathcal B(1)}
\sum_{j=0}^{2}
\frac{h(\mu_\pi)}
{L(1,\pi,\mathrm{Ad})}
\overline{B_\pi^{(j)}(m_1,m_2)}
B_\pi^{(j)}(n_1,n_2).
\end{equation*}

The oldform coefficient formulae and Gram--Schmidt bounds in
\cite[Proposition~5 and Corollary~4]{BC} will be used below.

\subsubsection{Minimal Eisenstein series}

Following \cite[Proposition~6]{BC}, parameterize the minimal
Eisenstein spectrum by $s=(s_1,s_2)\in(\i\mathbb R)^2$ and set
$s_3=-s_1-s_2$. Let $B_s^{(j)}(m,n)$, $j=0,1,2$, denote the
oldform coefficients obtained from the Gram--Schmidt construction
in that proposition, with the common normalizing factor included
in the spectral density below.
The minimal Eisenstein contribution in the normalization of
\cite[Section~5.3]{BC} is
\begin{equation}
\label{eq:Emin}
\begin{aligned}
\cE_{\min}
:=
c_{\min}
\sum_{j=0}^{2}
\iint_{\Re(s)=0}
\frac{h(s)}
{
\displaystyle
\prod_{1\le i<\ell\le3}
|\zeta(1+s_i-s_\ell)|^2
}
\overline{B_s^{(j)}(m_1,m_2)}
B_s^{(j)}(n_1,n_2)
\,\mathrm{d}s_1\,\mathrm{d}s_2,
\end{aligned}
\end{equation}
where $c_{\min}>0$ is an absolute constant.  In these Eisenstein
spectral integrals, $\mathrm{d}s_j$ and $\mathrm{d}s$ denote
Lebesgue measure in the imaginary coordinates, with fixed measure
constants absorbed into $c_{\min}$ and $c_{\max}$.

\subsubsection{Maximal Eisenstein series}

Let $\phi$ be a spherical cuspidal $\mathrm{GL}_2$ Maass newform
of trivial central character, and let $t_\phi$ be its Langlands
parameter, normalized so that its Laplace eigenvalue is
$\frac{1}{4}-t_\phi^2$.  Thus $t_\phi\in\i\mathbb R$ in the tempered case
and $t_\phi\in[-7/64,7/64]$ otherwise
\cite[Section~1.4]{BC}.  For $s\in\i\mathbb R$,
the induced $\mathrm{GL}_3$ parameter is
\begin{equation*}
\mu_{\phi,s}
=
(s+t_\phi,\ s-t_\phi,\ -2s).
\end{equation*}
The same adjoint convention, including the Euler factor at $N$,
is used for $L(s,\phi,\Ad)$.

For $\phi$ of level $N$, write $A_{(\phi,s)}(m,n)$ for the
Fourier--Whittaker coefficients of the associated Eisenstein series
\cite[Proposition~7(a)]{BC}. We put
\begin{equation}
\label{eq:Emax-new}
\begin{aligned}
\cE_{\max,\mathrm{new}}
:=
c_{\max}
\sum_{\substack{\phi\,\operatorname{level}\,N}}
\int_{\Re(s)=0}
\frac{
h(s+t_\phi,s-t_\phi,-2s)
}{
L(1,\phi,\mathrm{Ad})
|L(1+3s,\phi)|^2
}
\overline{A_{(\phi,s)}(m_1,m_2)}
A_{(\phi,s)}(n_1,n_2)
\,\mathrm{d}s.
\end{aligned}
\end{equation}

For $\phi$ of level $1$, let $B_{(\phi,s)}^{(j)}(m,n)$,
$j=0,1,2$, be the level $N$ oldform coefficients of
\cite[Proposition~7(b)]{BC}, with the common normalizing factor
included in the spectral density below. Their contribution is
\begin{equation}
\label{eq:Emax-old}
\begin{aligned}
\cE_{\max,\mathrm{old}}
:=
c_{\max}
\sum_{\substack{\phi\,\operatorname{level}\,1}}
\sum_{j=0}^{2}
\int_{\Re(s)=0}
\frac{
h(s+t_\phi,s-t_\phi,-2s)
}{
L(1,\phi,\mathrm{Ad})
|L(1+3s,\phi)|^2
}
\overline{B_{(\phi,s)}^{(j)}(m_1,m_2)}
B_{(\phi,s)}^{(j)}(n_1,n_2)
\,\mathrm{d}s,
\end{aligned}
\end{equation}
where $c_{\max}>0$ is an absolute normalization constant.
\subsection{The Kuznetsov trace formula}
We may now state the Kuznetsov formula for the group 
$\Gamma_0^{(3)}(N)\subseteq\mathrm{SL}_3(\mathbb Z)$ for $N$ prime.

\begin{lemma}
\label{thm:prime-level-kuznetsov}
Let $N$ be an odd prime and let $
m_1,m_2,n_1,n_2\in\mathbb N.$
Let $
h\in\widetilde{\mathcal H}(10).$
There exist absolute constants
$
c_{\mathrm{cusp}},
c_{\min},
c_{\max}>0
$
such that
\begin{equation}
\label{eq:full-kuznetsov}
\frac{P_N}{N^2}\bigl(\mathcal C_{\mathrm{new}}+\cE_{\max,\mathrm{new}}\bigr)
+\mathcal C_{\mathrm{old}}+\cE_{\min}+\cE_{\max,\mathrm{old}}
=\Delta+\mathcal{S}_4+\mathcal{S}_5+\mathcal{S}_6,
\end{equation}
where
\begin{equation}\label{gs}
\begin{split}
\Delta
&=
P_N\,
\delta_{m_1,n_1}\delta_{m_2,n_2}
\frac{1}{192\pi^5}\int_{\Re(\mu)=0}
h(\mu)\operatorname{spec}(\mu)\,\mathrm{d}\mu.\\
\mathcal{S}_4
&=
P_N
\sum_{\epsilon=\pm1}
\sum_{\substack{
ND_2\mid D_1\\
m_2D_1=n_1D_2^2
}}
\frac{
\widetilde S(
-\epsilon n_2,m_2,m_1;
D_2,D_1)
}{
D_1D_2
}
\Phi_{w_4}
\left(
\frac{
\epsilon m_1m_2n_2
}{
D_1D_2
}
\right).\\
\mathcal{S}_5
&=
P_N
\sum_{\epsilon=\pm1}
\sum_{\substack{
N\mid D_1\mid D_2\\
m_1D_2=n_2D_1^2
}}
\frac{
\widetilde S(
\epsilon n_1,m_1,m_2;
D_1,D_2)
}{
D_1D_2
}
\Phi_{w_5}
\left(
\frac{
\epsilon n_1m_1m_2
}{
D_1D_2
}
\right).\\
\mathcal{S}_6
&=
P_N
\sum_{\epsilon_1,\epsilon_2=\pm1}
\sum_{\substack{
N\mid D_1\\
N\mid D_2
}}
\frac{
S^{(N)}
(
\epsilon_2n_2,\epsilon_1n_1,m_1,m_2;
D_1,D_2
)
}{
D_1D_2
}
\Phi_{w_6}
\left(
-\frac{
\epsilon_2m_1n_2D_2
}{
D_1^2
},
-\frac{
\epsilon_1m_2n_1D_1
}{
D_2^2
}
\right).
\end{split}
\end{equation}
\end{lemma}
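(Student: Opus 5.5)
This lemma is essentially a restatement of the prime-level Kuznetsov formula of Blomer--Comtat \cite[Lemma~16]{BC} in our normalizations. The plan is therefore to derive it from that result, not from scratch. We would start from the Poincaré series construction in \cite[Section~5]{BC}. Take the inner product of two Poincaré series $P_{m_1,m_2}$ and $P_{n_1,n_2}$ for $\Gamma_0^{(3)}(N)$, built from the same test function $h\in\widetilde{\mathcal H}(10)$, and evaluate it in two ways.

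\emph{Geometric side.} Unfold one Poincaré series and use the Bruhat decomposition of $\Gamma_0^{(3)}(N)$ into $U(\mathbb Z)\backslash\Gamma_0^{(3)}(N)/U(\mathbb Z)$ cells.
\begin{itemize}
\item The identity cell gives $\Delta$.
\item The $w_4$ and $w_5$ cells give $\mathcal S_4$ and $\mathcal S_5$, with the Kloosterman sums $\widetilde S$.
\item The long Weyl element gives $\mathcal S_6$, with $S^{(N)}$.
\end{itemize}
The divisibility conditions $ND_2\mid D_1$, $N\mid D_1\mid D_2$ and $N\mid D_1,D_2$, together with $N\mid B_1$, come from the congruence $\gamma_{31}\equiv\gamma_{32}\equiv0\bmod N$ read off from the Plücker coordinates. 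The factor $P_N$ comes from our use of the probability measure: the volume of $\Gamma_0^{(3)}(N)\backslash\mathrm{SL}_3(\mathbb R)/\mathrm{SO}_3(\mathbb R)$ is $P_N$ times that for $\mathrm{SL}_3(\mathbb Z)$. The archimedean integrals are the kernels $K_{w_4}$ and $K^{\mathrm{sym}}_{w_6}$ written as Mellin--Barnes integrals with the contour conventions of \cite{BLv2}. Integrating them against $h\,\operatorname{spec}$ yields $\Phi_{w_4},\Phi_{w_5},\Phi_{w_6}$, and the symmetrization over signs produces the sums over $\epsilon$ and $\epsilon_1,\epsilon_2$.

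\emph{Spectral side.} Insert the spectral decomposition of $L^2(\Gamma_0^{(3)}(N)\backslash\mathbb H^3)$ and split it into four pieces:
\begin{itemize}
\item the new cusp forms;
\item oldforms lifted from level $1$, where the three embeddings $j=0,1,2$ are Gram--Schmidt orthonormalized as in \cite[Proposition~5, Corollary~4]{BC};
\item the minimal Eisenstein series \cite[Proposition~6]{BC};
\item maximal Eisenstein series induced from $\mathrm{GL}_2$ forms of level $N$ and of level $1$ \cite[Proposition~7]{BC}.
\end{itemize}
For each spectral component, the Whittaker coefficient of the Poincaré series equals $h(\mu)$ times the conjugate Fourier coefficient, up to the archimedean normalization that defines $\operatorname{spec}$. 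Squared $L^2$-norms of newforms are expressed via $L(1,\pi,\Ad)$, or the corresponding $L(1,\phi,\Ad)|L(1+3s,\phi)|^2$ and $\prod|\zeta(1+s_i-s_\ell)|^2$ for Eisenstein series. For level-$N$ newforms this expression carries an extra local factor at $N$, which together with the probability normalization produces the factor $P_N/N^2$ in front of $\mathcal C_{\mathrm{new}}+\cE_{\max,\mathrm{new}}$. For the oldforms, the level-$N$ local factors are absorbed into the coefficients $B^{(j)}$ and into the definition of the spectral weight, as stated above.

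The main obstacle is bookkeeping rather than new analysis. The constants $c_{\mathrm{cusp}}, c_{\min}, c_{\max}$, the factor $1/(192\pi^5)$ in $\Delta$, and the normalization ratio $P_N/N^2$ must all be consistent with \cite[Section~5.3]{BC} and with the kernel normalizations in \cite{BLv2}. We would fix these constants by comparing with the level-one Kuznetsov formula at $N=1$, where $P_N=1$ and the old terms disappear. The remaining point is the local norm computation at $N$ for newforms, using the ramified Satake parameters \eqref{eq:ramified-Euler}, and we would take this verbatim from \cite{BC}.
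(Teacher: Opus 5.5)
Your plan follows the paper's route: the lemma is \cite[Lemma~16]{BC} transcribed into the kernel normalizations of \cite{BLv2}, and the only content beyond citation is normalization bookkeeping, which the paper records in the Remark after the lemma.

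Your sketch is thinner than the paper only on the factor $P_N/N^2$. A comparison at $N=1$ cannot see this factor, and the paper does not take it from \cite{BC} alone. For cusp forms it computes the local correction $\zeta_N(3)L_N(1,\pi,\Ad)/L_N(1,\pi\times\widetilde\pi)=\zeta_N(3)/\zeta_N(1)=N^2/P_N$ via \cite{Jo,LM}. For $\cE_{\max,\new}$ it checks separately that the compact-induced norm $N(N+1)/P_N$ times the $\GL_2$ factor $N/(N+1)$ again gives $N^2/P_N$, with $|L_N(1+3s,\phi)|^{-2}$ unchanged by the induction isometry of \cite{FLO}. Your plan should not assume that $\cE_{\max,\new}$ inherits the cuspidal factor automatically.
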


\begin{remark}
With the displayed kernels, the diagonal constant is
$(192\pi^5)^{-1}$, as in \cite[Lemma~2.8]{BLv2}.
For $\pi\in\mathcal B(N)$, put
$D_\pi(v)=\sum_{m,n\geq1}|A_\pi(m,n)|^2(m^2n)^{-v}$.
The Schur formula and the local Weil--Deligne parameter give
\begin{align*}
\operatorname*{Res}_{v=1}D_\pi(v)
= &~\frac{N^2}{P_N\zeta(3)}L(1,\pi,\Ad).
\end{align*}
In Rankin--Selberg unfolding, the Eisenstein residue and quotient
volume contribute cancelling factors $P_N^{-1}$ and $P_N$.
For the fixed archimedean Whittaker function
and actual first Fourier coefficient $1$, the
squared probability norm
therefore acquires the factor $N^2/P_N$ relative to level $1$ (see
\cite[Theorems~3.5 and~3.7]{Jo} and
\cite[Section~4, proof of Theorem~4.1]{LM}).
Here $\zeta_N(u)=(1-N^{-u})^{-1}$, and the local
correction in this norm formula is
\begin{align*}
\frac{\zeta_N(3)L_N(1,\pi,\Ad)}
     {L_N(1,\pi\times\widetilde\pi)}
= &~\frac{\zeta_N(3)}{\zeta_N(1)}=\frac{N^2}{P_N}.
\end{align*}
For the maximal new spectrum, the compact induced
squared norm contributes
$N(N+1)/P_N$ by \cite[Lemma~10 and Eq.\,(3.6)]{BC}, while the
squared probability norm of the inducing
$\GL_2$ newform contributes $N/(N+1)$ under the same adjoint
convention (see \cite[Section~4]{LM} and \cite[Theorem~3.7]{Jo}).
Generic Fourier unfolding uses the unscaled Jacquet map,
with the local inducing vector normalized by
$W_{\phi,N}(1)=1$.
The induction isometry
\cite[Appendix~A, (A.1), (A.4), Proposition~A.2 and~(A.5)]{FLO}, together with
\cite[Theorem~3.7]{Jo}, gives the same squared local Jacquet factor
$|L_N(1+3s,\phi)|^{-2}$ as in the unramified case.
Here the canonical pairing in rank $r$ is
$\zeta_N(r)$ times Jo's mirabolic pairing, and
$(N(N+1)/P_N)\zeta_N(2)/\zeta_N(3)=1$.
Thus the remaining squared norm factor is $N^2/P_N$ in both new sectors,
and taking inverse squared norms gives $P_N/N^2$ in \eqref{eq:full-kuznetsov}.
\end{remark}

The trace formula yields the following weighted
orthogonality estimate for cuspidal newforms.  The proof is given
in Section~\ref{sec:proof-new-orth}.

\begin{lemma}
\label{lem:new-orth}
Fix $0\leq\eta<2$.  There exists $A_0=A_0(\eta)\geq10$
such that the following holds for every fixed
$h\in\widetilde{\mathcal H}(A_0)$. Let $N$ be an odd prime and let
$m_1,m_2,n_1,n_2\in\mathbb N$. Put $Q=m_1m_2n_1n_2$.
If $(Q,N)=1$ and $Q\leq N^\eta$, then for every $\eps>0$,
\begin{equation}
 c_{\cusp}\sum_{\pi\in\cB(N)}
 \frac{h(\mu_\pi)
 \overline{A_\pi(m_1,m_2)}A_\pi(n_1,n_2)}
 {L(1,\pi,\Ad)}=
 P_N H_h\,
 \delta_{m_1,n_1}\delta_{m_2,n_2}
 +O_{h,\eta,\eps}\!\left(
 N^{1+\eps}Q^{\theta_2+\eps}+Q^{2\theta_3+\varepsilon}
 \right),
 \label{eq:neworth}
\end{equation}
where $\theta_2=7/64$, $\theta_3=5/14$, and the normalized spectral integral is
\begin{equation}\label{H_h}
H_h=\frac{1}{192\pi^5}\int_{\Re(\mu)=0}h(\mu)\operatorname{spec}(\mu)\,\mathrm{d}\mu.
\end{equation}
\end{lemma}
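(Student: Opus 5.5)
We apply Lemma~\ref{thm:prime-level-kuznetsov} with the fixed $h\in\widetilde{\mathcal H}(A_0)$ and multiply \eqref{eq:full-kuznetsov} by $N^2/P_N$. This isolates
\[
\mathcal C_{\new}=\frac{N^2}{P_N}\bigl(\Delta+\mathcal S_4+\mathcal S_5+\mathcal S_6-\mathcal C_{\old}-\cE_{\min}-\cE_{\max,\old}\bigr)-\cE_{\max,\new}.
\]
The main term is $\frac{N^2}{P_N}\Delta=N^2H_h\delta_{m_1,n_1}\delta_{m_2,n_2}$. This differs from $P_NH_h\delta$ by $(N+1)H_h\delta$, which is $O(N)$ and is absorbed into $N^{1+\eps}Q^{\theta_2+\eps}$. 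We then bound each remaining term separately, uniformly in $N$ and in the four indices, under $(Q,N)=1$ and $Q\le N^\eta$ with $\eta<2$.

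\emph{Weyl terms.} In $\mathcal S_4$ the conditions $ND_2\mid D_1$ and $m_2D_1=n_1D_2^2$ force $N\mid m_2D_1/D_2^2=n_1$ unless the sum is empty. More carefully, $D_1=n_1D_2^2/m_2$ together with $N\mid D_1/D_2$ and $(n_1m_2,N)=1$ force $N\mid D_2$, so $D_1\ge N^2$ and $D_1D_2\ge N^3$. The argument $y=m_1m_2n_2/(D_1D_2)$ is then at most $Q/N^3\le N^{\eta-3}$. Using \eqref{eq:Phi-w4-bound} in the form $\Phi_{w_4}(y)\ll|y|^A$ together with Larsen's bound, the total is $O(P_N N^{-B})$ for any $B$, once $A$ (hence $A_0$) is large in terms of $\eta$. $\mathcal S_5$ is symmetric.

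For $\mathcal S_6$ we have $N\mid D_1,D_2$. The arguments satisfy
\[
|y_1^2y_2|=\frac{m_1^2n_2^2m_2n_1}{D_1^3}\le \frac{Q^2}{N^3}.
\]
If both $D_1,D_2\ge N$, then $\min\{|y_1^2y_2|,|y_1y_2^2|\}\le Q^2N^{-3}\cdot(\text{powers of }D^{-1})$. This is small only when $Q^2<N^{3-\delta}$, i.e.\ $\eta<3/2$, so for $\eta$ near $2$ the bound \eqref{eq:Phi-w6-small-bound} alone is insufficient. I would split: where $\min\{\cdot\}\le N^{-\delta}$, use \eqref{eq:Phi-w6-small-bound} with large $A$; in the complementary bounded range, use the last bound for $\Phi_{w_6}$ combined with the Blomer--Buttcane--Maga bound for $S^{(N)}$. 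After the $N^2/P_N$ normalization this should give $O(N^{1+\eps}\cdot Q^{\eps})$ or better, since $D_1D_2\ge N^2$ in that range and the sum over $D_1,D_2$ with $(D_1D_2)^{-1/2}$ decay is short. Making this range analysis clean for all $\eta<2$ is where I expect the most bookkeeping.

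\emph{Old and continuous spectrum.} For $\mathcal C_{\old}$ and $\cE_{\max,\old}$ we use \cite[Proposition~5, Corollary~4]{BC}. With $(Q,N)=1$, the oldform coefficients $B^{(j)}_\pi(m,n)$ reduce to bounded multiples (uniform in $N$) of $A_\pi(m,n)$. By the Kim--Sarnak bound \eqref{KS bound} the product of coefficients is $\ll Q^{2\theta_3+\eps}$, and summing over the level-$1$ spectrum against the fixed rapidly decaying $h$ gives $O(Q^{2\theta_3+\eps})$. The same reasoning with Eisenstein coefficients, which are divisor-type and tempered, handles $\cE_{\min}$ and $\cE_{\max,\old}$ within $O(Q^\eps)$; here we use the standard lower bounds $|\zeta(1+it)|\gg1/\log(2+|t|)$ and their $\GL_2$ analogues, absorbed by the decay of $h$.

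\emph{Main obstacle: $\cE_{\max,\new}$.} Here the coefficients $A_{(\phi,s)}(n_1,n_2)$ are built from Hecke eigenvalues of level-$N$ $\GL_2$ forms $\phi$ twisted by $|\cdot|^{s}$. They are bounded by $Q^{\theta_2+\eps}$ via the Kim--Sarnak $\GL_2$ bound, possibly with exceptional $t_\phi$ bounded by $7/64$. We need the number of level-$N$ newforms $\phi$ with $|t_\phi|\le T$, weighted by $L(1,\phi,\Ad)^{-1}\ll N^\eps$. By Weyl's law on $\Gamma_0(N)$, this count is $\ll N^{1+\eps}T^2$ uniformly. We also need $|L(1+3s,\phi)|^{-1}\ll (N(1+|s|+|t_\phi|))^\eps$, which holds unconditionally by standard zero-free region results for $\GL_2$ (the Hoffstein--Lockhart type argument adapted on the $1$-line, or Goldfeld--Hoffstein--Lieman). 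Combined with the decay of $h$ off the tempered region, this gives $\cE_{\max,\new}\ll N^{1+\eps}Q^{\theta_2+\eps}$. After checking that the normalizing factor $N^2/P_N\asymp1$ does not change this, the result is \eqref{eq:neworth}. The delicate point is the uniform count and the lower bound for $L(1+3s,\phi)$ in the level aspect.
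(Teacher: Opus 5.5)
Your outline matches the paper's proof everywhere except at $\cS_6$. The isolation of $N^2H_h\delta_{m_1,n_1}\delta_{m_2,n_2}$, with the $(N+1)H_h$ discrepancy absorbed, is as in the paper. So is the treatment of $\cS_4,\cS_5$ via $N\mid D_2$, $D_1D_2\ge N^3$ and $\Phi_{w_4}(y)\ll|y|^A$. The same holds for the reduction of the oldvectors to $A_\Pi(m,n)$ at indices coprime to $N$, and for the bound $N^{1+\eps}Q^{\theta_2+\eps}$ for $\cE_{\max,\new}$ from the uniform Weyl law and the lower bounds for $L(1,\phi,\Ad)$ and $L(1+3s,\phi)$.

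At $\cS_6$ there is a genuine gap. You bound the minimum in \eqref{eq:Phi-w6-small-bound} by the single quantity $|y_1^2y_2|=m_1^2n_2^2m_2n_1/D_1^3\le Q^2N^{-3}$. From this you conclude that the small-argument bound only works for $\eta<3/2$, and you propose to treat a complementary range with the last (logarithmic) bound for $\Phi_{w_6}$ and the pointwise Blomer--Buttcane--Maga bound. That complementary estimate does not give what you claim:
\begin{itemize}
\item For $|y_1|,|y_2|\lesssim 1$ the last bound provides no decay.
\item The pointwise Kloosterman bound contains the factor $(D_1,D_2)^{1/2}\ge N^{1/2}$, since both moduli are divisible by $N$. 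So already the single term $D_1=D_2=N$ is only bounded by $P_N N^{-1/2+\eps}\asymp N^{3/2+\eps}$, which exceeds $N^{1+\eps}$.
\item The normalizing factor $N^2/P_N\asymp1$ does not help.
\item Bounding by $|y_1^2y_2|$ alone gives no decay in $D_2$, so even in your ``good'' range the sum over $D_2$ is not controlled.
\end{itemize}

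The missing observation is that $|y_1y_2|=Q/(D_1D_2)$, so for every admissible pair of moduli
\[
\min\{|y_1^2y_2|,|y_1y_2^2|\}\le|y_1y_2|^{3/2}=\Bigl(\frac{Q}{D_1D_2}\Bigr)^{3/2}\le\Bigl(\frac{Q}{N^2}\Bigr)^{3/2}\le N^{-3(2-\eta)/2}.
\]
Hence your complementary range is empty once $\delta\le 3(2-\eta)/2$, for every $\eta<2$. This is exactly where the hypothesis $\eta<2$ is used.

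Inserting this into \eqref{eq:Phi-w6-small-bound} gives $|\Phi_{w_6}|\ll Q^{3A/2+\eps}(D_1D_2)^{-3A/2+\eps}$, which decays in both moduli. Summing against the averaged Kloosterman bound \cite[Eq.\,(4.2)]{BBM} over dyadic ranges $X_i\ge N$ yields $\cS_6\ll N^{3/2-3A+\eps}Q^{3A/2+\eps}$. (The pointwise bound also suffices after enlarging $A$.) This is $O(N^{-2})$ once $A$, and hence $A_0$, is large in terms of $\eta$.

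Two smaller points. First, the inducing level-$1$ forms in $\cE_{\max,\old}$ are not known to be tempered at finite primes, so those coefficients are only $\ll Q^{\theta_2+\eps}$, not $Q^{\eps}$. This is still dominated by $Q^{2\theta_3+\eps}$ and does not affect the result. Second, the finiteness of $\sum_\Pi|h(\mu_\Pi)|/L(1,\Pi,\Ad)$ over the level-$1$ spectrum needs a polynomial lower bound for $L(1,\Pi,\Ad)$ together with a Weyl-type count, which you leave implicit.
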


\begin{remark}\label{rem:A0}
In the moment estimates in Section \ref{proof of the main th}, it suffices to apply
Lemma~\ref{lem:new-orth} with $\eta=1$.
For this choice, $A=3$ in
Sections~\ref{subsec:short-weyl}--\ref{subsec:long-weyl} gives
$\cS_4,\cS_5,\cS_6\ll_h N^{-2}$, while the remaining spectral
estimates use only the rapid decay of $h$ in the prescribed strip.
Thus $A_0$ and an admissible test function $h$ may be fixed
independently of the moment order. 
\end{remark}

\begin{corollary}\label{cor:mass}
Assume $H_h>0$.
Taking $m_1=m_2=n_1=n_2=1$ in Lemma~\ref{lem:new-orth}, we obtain
\[
 W_N=P_N H_h/c_{\cusp}+O_{h,\eps}(N^{1+\eps})
 =c_hN^2+O_{h,\eps}(N^{1+\eps}),
 \qquad c_h=H_h/c_{\cusp}>0.
\]
\end{corollary}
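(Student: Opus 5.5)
The plan is to specialize Lemma~\ref{lem:new-orth} to $m_1=m_2=n_1=n_2=1$ and then divide by $c_{\cusp}$. Here $Q=1$, so the hypotheses $(Q,N)=1$ and $Q\le N^\eta$ hold trivially for any $\eta\in[0,2)$; we take $\eta=1$, as in Remark~\ref{rem:A0}. Since $A_\pi(1,1)=1$ by our normalization, the left-hand side of \eqref{eq:neworth} is exactly $c_{\cusp}W_N$. The Kronecker deltas equal $1$, so
\[
c_{\cusp}W_N=P_NH_h+O_{h,\eps}\bigl(N^{1+\eps}+1\bigr)=P_NH_h+O_{h,\eps}(N^{1+\eps}).
\]
Dividing by the absolute constant $c_{\cusp}>0$ gives the first equality, $W_N=P_NH_h/c_{\cusp}+O_{h,\eps}(N^{1+\eps})$.

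For the second equality we use $P_N=N^2+N+1$. This gives $P_NH_h/c_{\cusp}=c_hN^2+c_h(N+1)$, where $c_h=H_h/c_{\cusp}$. The term $c_h(N+1)$ is $O_h(N)$ and is absorbed into $O_{h,\eps}(N^{1+\eps})$. Positivity of $c_h$ follows from the assumption $H_h>0$ together with $c_{\cusp}>0$.

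No step here is genuinely hard; the only point to check is that the lemma applies with $h$ and $A_0$ fixed as in the theorem, i.e.\ that $h\in\widetilde{\mathcal H}(A_0)$ with $A_0\ge A_0(1)$. This is guaranteed by taking $A_0$ sufficiently large, as Remark~\ref{rem:A0} allows.
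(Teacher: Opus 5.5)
Your proposal is correct and matches the paper's argument, which is just this specialization. You set $Q=1$ in Lemma~\ref{lem:new-orth}, use $A_\pi(1,1)=1$, and divide by $c_{\cusp}$. You then absorb $(N+1)H_h/c_{\cusp}$ into the $O_{h,\eps}(N^{1+\eps})$ error via $P_N=N^2+N+1$.
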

\section{An approximation of $S(t,\pi)$}
\label{An approximation of S}
In this section we will prove several lemmas and derive an approximation of $S(t,\pi)$.
We assume that the GRH holds for $L(s,\pi)$ and write $\rho=1/2+\i\gamma$
for a typical nontrivial zero of $L(s,\pi)$, equivalently
a zero of $\Lambda(s,\pi)$. All sums over $\rho$ count these zeros
with multiplicity.
Recall that the archimedean parameters satisfy
$|\Re(\mu_j)|\leq 5/14$ for $1\leq j\leq3$.
For $\Re(s)>1$, we have the Dirichlet series expansion for the logarithmic derivative of $L(s,\pi)$,
\bea\label{Log derivative definition}
-\frac{L'}{L}(s,\pi)=\sum_{n=1}^\infty \frac{\Lambda(n)C_\pi(n)}{n^s},
\eea
where $\Lambda(n)$ denotes the von Mangoldt function, and
\bea\label{Log derivative coefficient}
C_\pi(n)=\begin{cases}
\alpha_p^k+\beta_p^k+\gamma_p^k, 
\,&\text{if}\,\,n=p^k,\ k\geq1,\ p\not\mid N,\\
\alpha_N^k+\beta_N^k, 
\,&\text{if}\,\,n=N^k,\ k\geq1,\\
0, \,& \text{otherwise}.
\end{cases}
\eea
In particular,
$C_\pi(1)=0$. 
%
\begin{lemma}\label{lemma 1}
Let $x>1$. For $s\neq\rho$, $s\neq-2m+\mu_j$ ($m\geq0,\ 1\leq j\leq3$),
we have the following identity
\begin{equation*}
\begin{split}
\frac{L'}{L}(s,\pi)=&-\sum_{n\leq x^2} \frac{C_\pi(n)\Lambda_x(n)}{n^s}
+\frac{1}{\log x}\sum_\rho\frac{x^{\rho-s}(1-x^{\rho-s})}{(\rho-s)^2}\\
&+\frac{1}{\log x}\sum_{j=1}^3\sum_{m=0}^\infty\frac{x^{-2m+\mu_j-s}(1-x^{-2m+\mu_j-s})}
{(-2m+\mu_j-s)^2},
\end{split}
\end{equation*}
where
\bna
\Lambda_x(n)=\begin{cases}
\Lambda(n), \,&\textit{if }\,\,n\leq x,\\
\Lambda(n)\frac{\log(x^2/n)}{\log x}, \,& \textit{if }\,\,x\leq n\leq x^2,\\
0, \,& \textit{if }\,\,n\geq x^2.\\
\end{cases}
\ena
\end{lemma}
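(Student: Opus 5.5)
This is the standard Selberg-type explicit formula. The approach is to integrate $-\frac{L'}{L}(s+w,\pi)\,\frac{x^{w}(1-x^{w})}{w^{2}}$ against a vertical contour and then shift the contour to the left.

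\textbf{The kernel integral.} For $c>0$ and $y>0$ we have
\[
\frac{1}{2\pi\i}\int_{(c)}\frac{y^{w}(1-y^{w})}{w^{2}}\,\mathrm{d}w
=\frac{1}{2\pi\i}\int_{(c)}\frac{y^{w}-y^{2w}}{w^2}\,\mathrm{d}w .
\]
The elementary evaluation is $\frac{1}{2\pi\i}\int_{(c)} y^{w}w^{-2}\,\mathrm{d}w=\max(\log y,0)$. Applying it with $y=x/n$ and $y=x^2/n$ gives
\[
\frac{1}{2\pi\i}\int_{(c)} \Big(\frac{x}{n}\Big)^{w}\Big(1-\Big(\frac{x}{n}\Big)^{w}\Big)\ldots
\]
More precisely, with $z=x^w n^{-w}-x^{2w}n^{-w}$ one obtains
\[
\frac{1}{2\pi\i}\int_{(c)}\frac{x^{w}-x^{2w}}{n^{w}w^{2}}\,\mathrm{d}w
=\max(\log(x/n),0)-\max(\log(x^2/n),0),
\]
which equals $-\log x\cdot\Lambda_x(n)/\Lambda(n)$ on the support.

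\textbf{The Dirichlet series side.} Take $c$ large enough that $\Re(s+w)>1$. Insert \eqref{Log derivative definition} and interchange the sum and integral, which is justified by absolute convergence since the kernel decays like $|w|^{-2}$. This gives
\[
\frac{1}{2\pi\i}\int_{(c)}-\frac{L'}{L}(s+w,\pi)\frac{x^{w}-x^{2w}}{w^{2}}\,\mathrm{d}w
=-\log x\sum_{n\le x^2}\frac{C_\pi(n)\Lambda_x(n)}{n^{s}}.
\]

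\textbf{Shifting the contour.} Next, move the line of integration to $\Re(w)=-K$ with $K\to\infty$ through a suitable sequence, and collect residues:
\begin{itemize}
\item At $w=0$, the double pole produces the residue $-\frac{L'}{L}(s,\pi)\cdot(\log x-2\log x)=\log x\,\frac{L'}{L}(s,\pi)$.
\item At $w=\rho-s$, each zero $\rho$ of $\Lambda(s,\pi)$ gives $-\frac{x^{\rho-s}(1-x^{\rho-s})}{(\rho-s)^2}$.
\item At $w=-2m+\mu_j-s$, the trivial zeros coming from the poles of $\Gamma\big(\frac{s-\mu_j}{2}\big)$ in $L_\infty$ give the analogous terms.
\end{itemize}
Entireness of $\Lambda(s,\pi)$ (from \eqref{eq:completed-L}) guarantees that these are the only poles. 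The factor $N^{s/2}$ contributes nothing, since its logarithmic derivative is constant and entire. The Euler factor at $N$ in \eqref{eq:ramified-Euler} has no zeros, only poles of $L$, which lie in $\Re(s)\ge 1/2$ away from any cancellation. Equating the two evaluations of the integral and dividing by $\log x$ gives the stated identity after rearranging signs.

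\textbf{Main obstacle.} The main difficulty is justifying the contour shift: we need the horizontal integrals and the integral on $\Re(w)=-K$ to tend to $0$. To handle this, choose heights $T$ avoiding zero ordinates so that the standard bound $\frac{L'}{L}(\sigma+\i T,\pi)\ll\log^2(N(|T|+3))$ holds; this follows from the Hadamard product and zero counting, as in Iwaniec--Kowalski. On the far-left line, use the functional equation \eqref{eq:functional-equation} and Stirling's formula, staying at distance $\ge 1/4$ from the trivial zeros, to get $\frac{L'}{L}\ll\log(K+|w|)$. Then the factor $x^{-K}|w|^{-2}$ forces these integrals to vanish in the limit.

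Finally, the sum over $\rho$ converges absolutely because $\sum_\rho|\rho-s|^{-2}<\infty$. The sum over $m$ converges because of the decay of $x^{-2m}$ together with the $m^{-2}$ factor.
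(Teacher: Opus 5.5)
Your proposal is correct and is essentially the paper's proof. Both integrate $\frac{x^w(1-x^w)}{w^2}\frac{L'}{L}(s+w,\pi)$ along a line to the right, evaluate it termwise via the discontinuous integral to obtain the $\Lambda_x$-weighted sum, and shift the contour left, collecting residues at $w=0$, $w=\rho-s$ and the trivial zeros; you also give more detail on the boundary estimates than the paper does. Two harmless slips: $w=0$ is actually a simple pole of the full integrand, though your residue $\log x\,\frac{L'}{L}(s,\pi)$ is right. Also, the aside about the Euler factor at $N$ is wrong (its poles lie on $\Re s=0$ and $\Re s=-1/2$) but unnecessary, since $L(s,\pi)=\Lambda(s,\pi)/(N^{s/2}L_\infty(s,\pi))$ is entire and the poles of $L'/L$ are exactly the zeros of $\Lambda(s,\pi)$ and the poles of $L_\infty(s,\pi)$.
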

\begin{proof}
Recall the discontinuous integral
\bna
\frac{1}{2\pi \i}\int_{(\alpha)}\frac{y^s}{s^2}\mathrm{d}s=
\begin{cases}
\log y, \,&\textit{if }\,\, y\geq 1,\\
0, \,& \textit{if }\,\, 0<y\leq 1
\end{cases}
\ena
for $\alpha>0$.
Put $\alpha=\max\{2,2-\Re(s)\}$. 
Using \eqref{Log derivative definition}, we obtain
\[
(\log x)\sum_{n\leq x^2}\frac{C_\pi(n)\Lambda_x(n)}{n^s}=\frac{1}{2\pi\i}\int_{(\alpha)}
 \frac{x^v(1-x^v)}{v^2}\frac{L'}{L}(s+v,\pi)\,\mathrm{d}v.
\]
We move the contour all the way to the left, first letting the
heights of truncated rectangles tend to infinity along suitable
sequences avoiding the zeros, and then sending the left edge to
$-\infty$ while keeping it away from the trivial zeros.
The boundary integrals vanish by the standard bounds for $L'/L$,
the functional equation \eqref{eq:functional-equation}, and Stirling's formula, since $x>1$. The poles crossed are at $v=0$, $v=\rho-s$, and
$v=-2m+\mu_j-s$, counted with multiplicity. Thus
\[
\begin{aligned}
\frac{1}{2\pi\i}&\int_{(\alpha)}
 \frac{x^v(1-x^v)}{v^2}\frac{L'}{L}(s+v,\pi)\,\mathrm{d}v\\=&-\frac{L'}{L}(s,\pi)\log x
 +\sum_\rho\frac{x^{\rho-s}(1-x^{\rho-s})}{(\rho-s)^2}
 +\sum_{j=1}^3\sum_{m\geq0}
 \frac{x^{-2m+\mu_j-s}(1-x^{-2m+\mu_j-s})}
 {(-2m+\mu_j-s)^2}.
\end{aligned}
\]
Both sums converge absolutely under GRH. Comparing the two
expressions for the integral proves the identity.
\end{proof}
\begin{lemma}\label{lemma 2}
For $s=\sigma+\i t$ and $s'=\sigma'+\i t'$ such that $1/2\leq \sigma, \sigma'\leq 2$, $s\neq \rho$ and $s'\neq \rho$,
we have
\bna\label{Im part}
\Im\left(\frac{L'}{L}(s,\pi)-\frac{L'}{L}(s',\pi)\right)
=\Im\sum_\rho\left(\frac{1}{s-\rho}-\frac{1}{s'-\rho}\right)+O(1),
\ena
and, uniformly in the archimedean parameters,
\bea\label{Re part}
\Re\frac{L'}{L}(s,\pi)=\sum_\rho\frac{\sigma-1/2}{(\sigma-1/2)^2
+(t-\gamma)^2}+O(\log\mathcal Q_\pi(t)),
\eea
where
\[
 \mathcal Q_\pi(t):=N\prod_{j=1}^3(2+|t-\Im(\mu_j)|).
\]
\end{lemma}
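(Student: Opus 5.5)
We will derive both statements from the Hadamard factorization of the completed $L$-function $\Lambda(s,\pi)$ in \eqref{eq:completed-L}, combined with Stirling's formula for $L_\infty(s,\pi)$. Since $\Lambda(s,\pi)$ is entire of order one, Hadamard's theorem gives
\[
\frac{\Lambda'}{\Lambda}(s,\pi)=B_\pi+\sum_\rho\left(\frac1{s-\rho}+\frac1\rho\right),
\]
and hence
\[
\frac{L'}{L}(s,\pi)=B_\pi-\frac12\log N-\frac{L_\infty'}{L_\infty}(s,\pi)+\sum_\rho\left(\frac1{s-\rho}+\frac1\rho\right).
\]
The constant satisfies the standard relation $\Re B_\pi=-\sum_\rho\Re(1/\rho)$. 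This follows from the functional equation \eqref{eq:functional-equation}, from the fact that the zeros of $\widetilde\pi$ are the conjugates $\bar\rho$, and from $L_\infty(\bar s,\widetilde\pi)=\overline{L_\infty(s,\pi)}$ (Iwaniec--Kowalski, Section~5.12). Taking real parts therefore gives
\[
\Re\frac{L'}{L}(s,\pi)=\sum_\rho\Re\frac1{s-\rho}-\frac12\log N-\Re\frac{L_\infty'}{L_\infty}(s,\pi).
\]

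For \eqref{Re part}, we use GRH to write $\rho=1/2+\i\gamma$, so that $\Re\frac{1}{s-\rho}=\frac{\sigma-1/2}{(\sigma-1/2)^2+(t-\gamma)^2}$. The archimedean term equals $\frac12\sum_j\psi\bigl(\frac{s-\mu_j}2\bigr)$ plus a constant. Since $|\Re\mu_j|\le5/14$ and $1/2\le\sigma\le2$, the real part of $(s-\mu_j)/2$ stays in a compact subinterval of $(0,\infty)$, bounded away from the poles of $\psi$. Stirling's formula then gives $\psi\bigl(\frac{s-\mu_j}{2}\bigr)\ll\log(2+|t-\Im\mu_j|)$, uniformly in $\mu$. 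Adding the $\tfrac12\log N$ term yields the error $O(\log\mathcal Q_\pi(t))$.

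For the imaginary-part statement, we subtract the identity at $s'$ from the one at $s$. The constants $B_\pi$, $\frac12\log N$ and the terms $\sum_\rho 1/\rho$ cancel, which leaves the difference of the archimedean terms. Stirling gives $\Im\psi(w)=\arg w+O(1/|w|)$, which is bounded for $\Re w$ in a compact subinterval of $(0,\infty)$. Hence $\Im\frac{L_\infty'}{L_\infty}$ is $O(1)$ at both $s$ and $s'$, uniformly in $\mu$ and in $N$; the constant $\pi^{-3s/2}$ contributes a real logarithmic derivative. This gives the claimed formula with error $O(1)$.

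The main point requiring care is the uniformity in the archimedean parameters, where $\mu_j$ may be non-tempered. It is handled by the bound $|\Re\mu_j|\le 5/14<1/2\le\sigma$. This bound keeps the arguments of $\psi$ at distance at least $(1/2-5/14)/2$ from the origin and from the negative integers, where $\psi$ has its poles, so Stirling applies uniformly.

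A secondary point is justifying $\Re B_\pi=-\sum\Re(1/\rho)$. If one prefers to avoid it, the real-part formula can instead be obtained by comparing with $s_0=2+\i t$. There $\frac{L'}{L}(s_0,\pi)$ is bounded via \eqref{Log derivative definition}, using the Kim--Sarnak bound \eqref{KS bound}, since $2>1+\theta_3$, together with the ramified factor \eqref{eq:ramified-Euler}. The difference $\sum_\rho\Re\frac{1}{s_0-\rho}$ is bounded by $O(\log\mathcal Q_\pi(t))$, because the number of zeros with $|\gamma-t|\le1$ is $O(\log\mathcal Q_\pi(t))$, which itself follows from the same identity at $s_0$.
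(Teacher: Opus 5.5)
Your argument is correct and follows the paper's own route: Hadamard factorization of $\Lambda(s,\pi)$ with $\Re(b_\pi)=-\sum_\rho\Re(1/\rho)$, then bounds for $\Re\frac{\Gamma'}{\Gamma}$ and $\Im\frac{\Gamma'}{\Gamma}$ that are uniform when the real part of $(s-\mu_j)/2$ lies in $[1/14,33/28]$; the paper gets these bounds from the partial-fraction series for $\Gamma'/\Gamma$ rather than from Stirling, which makes no difference. Your optional detour via $s_0=2+\i t$ is not needed, and as phrased it is circular: deriving the zero count from ``the same identity at $s_0$'' brings back exactly the quantity $\Re B_\pi+\sum_\rho\Re(1/\rho)$ you wanted to avoid (a non-circular count would need something like Jensen's formula), but your main proof does not depend on it.
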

\begin{proof}
By the Hadamard factorization theorem of the entire function $\Lambda(s,\pi)$ 
and \eqref{eq:completed-L}, we have
\[
\begin{aligned}
 \frac{L'}{L}(s,\pi)
 =b_\pi+\sum_\rho\left(\frac1{s-\rho}+\frac1\rho\right)
 -\frac12\sum_{j=1}^3\frac{\Gamma'}{\Gamma}
 \left(\frac{s-\mu_j}2\right)+\frac32\log\pi-\frac12\log N,
\end{aligned}
\]
for some $b_\pi$ with
$\Re(b_\pi)=-\sum_\rho\Re(1/\rho)$ (see \cite[Proposition~5.7(3)]{IK}).
For $z\notin\{0,-1,-2,\ldots\}$, we have
\bea\label{log derivative gamma}
\frac{\Gamma'}{\Gamma}(z)=-\gamma+\sum_{k\geq 1}\left(\frac{1}{k}-\frac{1}{k-1+z}\right),
\eea
where $\gamma$ is the Euler constant.
Using \eqref{log derivative gamma}, we get that, uniformly for $1/14\leq u\leq33/28$
and $v\in\mathbb R$,
\bna
\Re \frac{\Gamma'}{\Gamma}(u+\i v)=-\gamma+
\sum_{k\geq 1}\left(\frac{1}{k}-\frac{k+u-1}{(k+u-1)^2+v^2}\right)\ll \log(2+|v|),
\ena
and
\bna
\Im\frac{\Gamma'}{\Gamma}(u+\i v)=\sum_{k\geq 1}\frac{v}{(k+u-1)^2+v^2}\ll 1.
\ena
Taking the difference of the imaginary parts and the real part
of the Hadamard identity, respectively, and applying these
bounds proves the lemma.
\end{proof}
\begin{lemma}\label{lemma 3}
Let $x\geq4$ and put $\sigma_x=1/2+1/\log x$.
For $\sigma\geq\sigma_x$ and $t\in\mathbb R$, we have
\begin{equation*}
\begin{split}
\frac{L'}{L}(\sigma+\i t, \pi)=&-\sum_{n\leq x^2} \frac{C_\pi(n)\Lambda_x(n)}{n^{\sigma+\i t}}
+O\left(x^{1/2-\sigma}\left|\sum_{n\leq x^2} \frac{C_\pi(n)\Lambda_x(n)}{n^{\sigma_x+\i t}}\right|\right)\\
&+O\big(x^{1/2-\sigma}\log\mathcal Q_\pi(t)\big),
\end{split}
\end{equation*}
with absolute implied constants, uniformly in $\mu_\pi$.
\end{lemma}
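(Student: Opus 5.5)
This is Selberg's classical lemma, and I would prove it from Lemma~\ref{lemma 1} and Lemma~\ref{lemma 2}. Put $s=\sigma+\i t$ with $\sigma\ge\sigma_x$. Apply Lemma~\ref{lemma 1} at $s$. Since the explicit prime sum already appears on the right-hand side, it remains to bound the zero sum and the trivial-zero sum.

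For the trivial zeros, $\Re(-2m+\mu_j-s)\le 5/14-1/2-2m<-1/7-2m$. Hence $|x^{-2m+\mu_j-s}|\le x^{-2m-1/7}$, and the whole sum is $O(x^{-\sigma}/\log x)$, which is absorbed by the last error term. For the zeros, $|x^{\rho-s}|=x^{1/2-\sigma}$ and $|1-x^{\rho-s}|\le 2$. Also $|\rho-s|^2=(\sigma-1/2)^2+(t-\gamma)^2$. So the zero sum is
\[
\ll \frac{x^{1/2-\sigma}}{\log x}\sum_\rho\frac{1}{(\sigma-1/2)^2+(t-\gamma)^2}
\le \frac{x^{1/2-\sigma}}{(\sigma_x-1/2)\log x}\sum_\rho\frac{\sigma_x-1/2}{(\sigma_x-1/2)^2+(t-\gamma)^2}.
\]
Here I used $\sigma\ge\sigma_x$, so each denominator only grows. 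Because $(\sigma_x-1/2)\log x=1$, this is $\ll x^{1/2-\sigma}\,Z$, where
\[
Z:=\sum_\rho\frac{\sigma_x-1/2}{(\sigma_x-1/2)^2+(t-\gamma)^2}.
\]
By \eqref{Re part} at $\sigma_x$, $Z=\Re\frac{L'}{L}(\sigma_x+\i t,\pi)+O(\log\mathcal Q_\pi(t))$.

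The key step is to bound $\Re\frac{L'}{L}(\sigma_x+\i t,\pi)$ in terms of $Z$ itself. Apply Lemma~\ref{lemma 1} at $s=\sigma_x+\i t$ and take real parts. There $x^{1/2-\sigma_x}=e^{-1}$, so the zero sum has modulus at most $\frac{2e^{-1}}{\log x}\sum_\rho|\rho-s|^{-2}=2e^{-1}Z$. The trivial-zero part is $O(1)$. This gives
\[
Z\le \left|\sum_{n\le x^2}\frac{C_\pi(n)\Lambda_x(n)}{n^{\sigma_x+\i t}}\right|+\frac{2}{e}Z+O(\log\mathcal Q_\pi(t)).
\]
Since $2/e<1$, I can solve for $Z$:
\[
Z\ll\Big|\sum_{n\le x^2}\frac{C_\pi(n)\Lambda_x(n)}{n^{\sigma_x+\i t}}\Big|+\log\mathcal Q_\pi(t).
\]
Substituting this into the bound $x^{1/2-\sigma}Z$ for the zero sum gives the lemma, with absolute constants. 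Uniformity in $\mu_\pi$ holds because $|\Re\mu_j|\le 5/14$ enters only through the trivial-zero sum and through Lemma~\ref{lemma 2}, both of which are uniform.

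The main obstacle is this absorption step, since it needs the numerical constant $2/e<1$. I would track the constants in Lemma~\ref{lemma 1} carefully: $|1-x^{\rho-s}|\le 1+e^{-1}$, and the bound on $|x^{\rho-s}|$ is exact. Justifying the termwise manipulations is routine, because the zero sum converges absolutely under GRH.
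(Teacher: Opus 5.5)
Your proof is correct and is essentially the paper's argument. You bound the zero sum from Lemma~\ref{lemma 1} by a constant times $x^{1/2-\sigma}Z$, then control $Z$ by taking real parts at $\sigma_x$, comparing with \eqref{Re part}, and absorbing via $2/e<1$; the paper packages the same absorption through a factor $w_\pi$ with $|w_\pi|\le1$ and the lower bound $1-2e^{-1}$.

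One small slip: the trivial-zero sum is $O(x^{5/14-\sigma}/\log x)$, not $O(x^{-\sigma}/\log x)$, and your intermediate bound $x^{-2m-1/7}$ discards the $\sigma$-dependence you need. This is harmless, since $x^{5/14-\sigma}\le x^{1/2-\sigma}$ means the term is still absorbed into the last error term.
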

\begin{proof}
Assume GRH, and recall that $|\Re(\mu_j)|\leq5/14$. 
For $\sigma\geq\sigma_x$, Lemma~\ref{lemma 1} gives
\begin{equation}\label{step 2}
\begin{aligned}
 \frac{L'}{L}(\sigma+\i t,\pi)
 ={}&-\sum_{n\leq x^2}\frac{C_\pi(n)\Lambda_x(n)}{n^{\sigma+\i t}}+2w_\pi(x,\sigma,t)x^{1/2-\sigma}
 \sum_\gamma\frac{\sigma_x-1/2}
 {(\sigma_x-1/2)^2+(t-\gamma)^2}\\
 &
 +O\left(\frac{x^{5/14-\sigma}}{\log x}\right),
\end{aligned}
\end{equation}
where $|w_\pi(x,\sigma,t)|\leq1$.
Taking real parts at $\sigma=\sigma_x$
in \eqref{step 2} and using \eqref{Re part}, we get
\begin{equation*}
\begin{split}
&\left(1-2\textrm{e}^{-1}\Re(w_\pi(x,\sigma_x,t))\right)
\sum_\gamma\frac{\sigma_x-1/2}
{(\sigma_x-1/2)^2+(t-\gamma)^2}\\
&\quad=-\Re\sum_{n\leq x^2} \frac{C_\pi(n)\Lambda_x(n)}{n^{\sigma_x+\i t}}
+O(\log\mathcal Q_\pi(t)).
\end{split}
\end{equation*}
Since $\left|1-2\textrm{e}^{-1}\Re(w_\pi(x,\sigma_x,t))\right|
\geq 1-2\textrm{e}^{-1}$,
we have 
\begin{equation}\label{step 4}
 \sum_\gamma\frac{\sigma_x-1/2}
 {(\sigma_x-1/2)^2+(t-\gamma)^2}
=O\left(\left|\sum_{n\leq x^2}
 \frac{C_\pi(n)\Lambda_x(n)}{n^{\sigma_x+\i t}}\right|\right)
 +O(\log\mathcal Q_\pi(t)).
\end{equation}
Substituting this into \eqref{step 2}, we complete the proof of the lemma.
\end{proof}
We now obtain the following approximation of $S(t,\pi)$.
\begin{theorem}\label{theorem 1}
Let $x\geq4$ and $\sigma_x=1/2+1/\log x$.  For $t\in\mathbb R$, we have
\[
\begin{aligned}
 S(t,\pi)=\frac1\pi\Im\sum_{2\leq n\leq x^2}
 \frac{C_\pi(n)\Lambda_x(n)}{n^{\sigma_x+\i t}\log n}+O\left(\frac1{\log x}
 \left|\sum_{n\leq x^2}\frac{C_\pi(n)\Lambda_x(n)}{n^{\sigma_x+\i t}}\right|\right)
 +O\left(\frac{\log\mathcal Q_\pi(t)}{\log x}\right),
\end{aligned}
\]
with absolute implied constants, uniformly in $\mu_\pi$.
\end{theorem}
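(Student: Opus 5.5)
This is Selberg's classical argument. We write
\[
\pi S(t,\pi)=-\int_{1/2}^{\infty}\Im\frac{L'}{L}(\sigma+\i t,\pi)\,\mathrm{d}\sigma
\]
(the argument along the horizontal line, with the averaging convention at zero ordinates handled by continuity) and split the integral at $\sigma_x$:
\[
\pi S(t,\pi)=-\int_{\sigma_x}^{\infty}\Im\frac{L'}{L}(\sigma+\i t,\pi)\,\mathrm{d}\sigma-(\sigma_x-1/2)\,\Im\frac{L'}{L}(\sigma_x+\i t,\pi)+\int_{1/2}^{\sigma_x}\Im\Big(\frac{L'}{L}(\sigma_x+\i t,\pi)-\frac{L'}{L}(\sigma+\i t,\pi)\Big)\,\mathrm{d}\sigma .
\]

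\textbf{First term.} We insert Lemma~\ref{lemma 3}. Integrating $-\sum_{n\le x^2}C_\pi(n)\Lambda_x(n)n^{-\sigma-\i t}$ over $\sigma\in[\sigma_x,\infty)$ gives exactly $\sum_{2\le n\le x^2}C_\pi(n)\Lambda_x(n)n^{-\sigma_x-\i t}/\log n$, since $C_\pi(1)=0$. This produces the main term after dividing by $\pi$. The error terms of Lemma~\ref{lemma 3} carry the factor $x^{1/2-\sigma}$, and
\[
\int_{\sigma_x}^\infty x^{1/2-\sigma}\,\mathrm{d}\sigma=\frac{e^{-1}}{\log x}.
\]
So they contribute the two stated $O$-terms.

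\textbf{Second term.} Lemma~\ref{lemma 3} at $\sigma=\sigma_x$ bounds $|\frac{L'}{L}(\sigma_x+\i t,\pi)|$ by $O(|\sum_{n\le x^2}\cdots|+\log\mathcal Q_\pi(t))$. Multiplying by $\sigma_x-1/2=1/\log x$ gives errors of the same shape.

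\textbf{Third term (the main obstacle).} By Lemma~\ref{lemma 2}, the integrand equals
\[
\Im\sum_\rho\Big(\frac1{\sigma_x+\i t-\rho}-\frac1{\sigma+\i t-\rho}\Big)+O(1),
\]
and the $O(1)$ integrates to $O(1/\log x)$. Under GRH we write $\rho=1/2+\i\gamma$. Then, for $1/2\le\sigma\le\sigma_x$, each summand has absolute value
\[
\frac{|t-\gamma|\,(\sigma_x-\sigma)\,\big((\sigma-1/2)+(\sigma_x-1/2)\big)}{\big((\sigma_x-1/2)^2+(t-\gamma)^2\big)\big((\sigma-1/2)^2+(t-\gamma)^2\big)}.
\]
Integrating in $\sigma$ and using the standard Selberg estimate, each zero contributes
\[
\ll\frac{(\sigma_x-1/2)^2}{(\sigma_x-1/2)^2+(t-\gamma)^2}
\]
(up to an absolute constant). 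Concretely, for $|t-\gamma|\ge\sigma_x-1/2$ one bounds the summand directly. For $|t-\gamma|<\sigma_x-1/2$ one uses
\[
\int_{1/2}^{\sigma_x}\frac{|t-\gamma|}{(\sigma-1/2)^2+(t-\gamma)^2}\,\mathrm{d}\sigma\le\pi/2,
\]
which is $\ll$ the same quantity. The total is therefore
\[
\ll(\sigma_x-1/2)\sum_\gamma\frac{\sigma_x-1/2}{(\sigma_x-1/2)^2+(t-\gamma)^2}.
\]
By \eqref{step 4} this is
\[
\ll\frac1{\log x}\Big(\Big|\sum_{n\le x^2}\frac{C_\pi(n)\Lambda_x(n)}{n^{\sigma_x+\i t}}\Big|+\log\mathcal Q_\pi(t)\Big).
\]

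Collecting the three pieces and dividing by $\pi$ gives the theorem. All constants are absolute and uniform in $\mu_\pi$, since Lemmas~\ref{lemma 2} and~\ref{lemma 3} are.
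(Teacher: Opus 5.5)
Your proposal is correct and follows the paper's proof essentially step for step. It uses the same three-piece split at $\sigma_x$, Lemma~\ref{lemma 3} for the first two pieces, and Lemma~\ref{lemma 2} together with \eqref{step 4} for the third. The only cosmetic difference is in the third piece: the paper avoids your case split on $|t-\gamma|$ by noting $(\sigma_x-\sigma)(\sigma+\sigma_x-1)=(\sigma_x-1/2)^2-(\sigma-1/2)^2\le(\sigma_x-1/2)^2$ and $\int_{1/2}^\infty|t-\gamma|\,((\sigma-1/2)^2+(t-\gamma)^2)^{-1}\,\mathrm{d}\sigma=\pi/2$, which handles all zeros at once.
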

\begin{proof}
Suppose first that $t$ is not a zero ordinate.  By definition,
\[
\begin{aligned}
 \pi S(t,\pi)
 =&-\int_{1/2}^\infty\Im \frac{L'}{L}(\sigma+\i t,\pi)\mathrm{d}\sigma\\
 ={}&-\int_{\sigma_x}^{\infty}\Im\frac{L'}{L}(\sigma+\i t,\pi)\,\mathrm{d}\sigma
 -(\sigma_x-1/2)\Im\frac{L'}{L}(\sigma_x+\i t,\pi)\\
 &+\int_{1/2}^{\sigma_x}\Im\left(
 \frac{L'}{L}(\sigma_x+\i t,\pi)-\frac{L'}{L}(\sigma+\i t,\pi)
 \right)\mathrm{d}\sigma\\
 =:&\,J_1+J_2+J_3.
\end{aligned}
\]
By Lemma \ref{lemma 3}, we have
\begin{equation*}
\begin{split}
J_1=&-\int_{\sigma_x}^\infty\Im \frac{L'}{L}(\sigma+\i t,\pi)\mathrm{d}\sigma\\
=&\int_{\sigma_x}^\infty\Im\sum_{n\leq x^2} 
\frac{C_\pi(n)\Lambda_x(n)}{n^{\sigma+\i t}}\mathrm{d}\sigma
+O\left(\left|\sum_{n\leq x^2} \frac{C_\pi(n)\Lambda_x(n)}{n^{\sigma_x+\i t}}\right|
\int_{\sigma_x}^\infty x^{1/2-\sigma}\mathrm{d}\sigma\right)\\
&+O\left(\log\mathcal Q_\pi(t)\int_{\sigma_x}^\infty x^{1/2-\sigma}\mathrm{d}\sigma\right)\\
=&\Im\sum_{2\leq n\leq x^2} \frac{C_\pi(n)\Lambda_x(n)}{n^{\sigma_x+\i t}\log n}
+O\left(\frac{1}{\log x}\left|\sum_{n\leq x^2} \frac{C_\pi(n)\Lambda_x(n)}{n^{\sigma_x+\i t}}\right|\right)
+O\left(\frac{\log\mathcal Q_\pi(t)}{\log x}\right).
\end{split}
\end{equation*}
Taking $\sigma=\sigma_x$ in Lemma \ref{lemma 3}, we have
\begin{equation*}
\begin{split}
|J_2|\leq&\,(\sigma_x-1/2)\left|\frac{L'}{L}(\sigma_x+\i t,\pi)\right|\\
=&\,O\left(\frac{1}{\log x}\left|\sum_{n\leq x^2} 
\frac{C_\pi(n)\Lambda_x(n)}{n^{\sigma_x+\i t}}\right|\right)
+O\left(\frac{\log\mathcal Q_\pi(t)}{\log x}\right).
\end{split}
\end{equation*}
Using Lemma \ref{lemma 2}, we get
\begin{equation*}
\begin{split}
&\Im\left( \frac{L'}{L}(\sigma_x+\i t,\pi)-\frac{L'}{L}(\sigma+\i t,\pi)\right)\\
=&\sum_\rho \frac{(\sigma_x-\sigma)(\sigma+\sigma_x-1)(t-\gamma)}
{\big((\sigma_x-1/2)^2+(t-\gamma)^2\big)\big((\sigma-1/2)^2+(t-\gamma)^2\big)}
+O(1).
\end{split}
\end{equation*}
Hence, by using \eqref{step 4}, we have
\begin{equation*}
\begin{split}
|J_3|\leq&\sum_\rho\int_{1/2}^{\sigma_x}
 \frac{(\sigma_x-1/2)^2|t-\gamma|}
{\big((\sigma_x-1/2)^2+(t-\gamma)^2\big)\big((\sigma-1/2)^2+(t-\gamma)^2\big)}\mathrm{d}\sigma
+O\bigg(\frac{1}{\log x}\bigg)\\
\leq&\,\frac{1}{\log x}\sum_\rho\frac{\sigma_x-1/2}{(\sigma_x-1/2)^2+(t-\gamma)^2}
\int_{1/2}^{\infty}\frac{|t-\gamma|}{(\sigma-1/2)^2+(t-\gamma)^2}\mathrm{d}\sigma
+O\bigg(\frac{1}{\log x}\bigg)\\
=&\,O\left(\frac{1}{\log x}\left|\sum_{n\leq x^2} \frac{C_\pi(n)\Lambda_x(n)}{n^{\sigma_x+\i t}}\right|\right)
+O\bigg(\frac{\log\mathcal Q_\pi(t)}{\log x}\bigg).
\end{split}
\end{equation*}
At a zero ordinate, one can take the average of the one-sided limits. Thus we complete the proof of the theorem.
\end{proof}
As a consequence of Theorem~\ref{theorem 1}, we obtain the
following bound for $S(t,\pi)$, uniformly in the archimedean parameters.
\begin{theorem}
For sufficiently large $N$, we have
\[
 S(t,\pi)\ll\frac{\log\mathcal Q_\pi(t)}{\log\log\mathcal Q_\pi(t)},
\]
uniformly for $\pi\in\mathcal B(N)$ and $t\in\mathbb R$.
\end{theorem}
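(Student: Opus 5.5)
The plan is to apply Theorem~\ref{theorem 1} with $x$ chosen as a small power of $\log\mathcal Q_\pi(t)$, and to bound the prime sums in it trivially, using the bounds on the Satake parameters. Write $\mathcal L=\log\mathcal Q_\pi(t)$. Since $\mathcal Q_\pi(t)\geq 8N$, for $N$ large we have $\mathcal L\geq\log N$, which is large. We will take $x=\mathcal L^{c}$ for a small fixed $c>0$, so that $x\geq4$ and $\log x\asymp\log\mathcal L$.

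\textbf{Bounding the coefficients.} For $p\nmid N$, \eqref{KS bound} gives $|C_\pi(p^k)|\leq 3p^{k\theta_3}$ with $\theta_3=5/14$. For $p=N$ we have $|\alpha_N|,|\beta_N|\leq1$, so $|C_\pi(N^k)|\leq 2$. We have $\Lambda_x(n)\leq\Lambda(n)$ and $n^{-\sigma_x}\leq n^{-1/2}$. Hence
\[
\left|\sum_{n\leq x^2}\frac{C_\pi(n)\Lambda_x(n)}{n^{\sigma_x+\i t}}\right|
\ll\sum_{p^k\leq x^2}\frac{p^{k\theta_3}\log p}{p^{k/2}}
\ll\sum_{p^k\leq x^2}\log p
\ll x^2 .
\]
The last sum is a Chebyshev-type sum over all prime powers up to $x^2$. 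Similarly,
\[
\left|\sum_{2\leq n\leq x^2}\frac{C_\pi(n)\Lambda_x(n)}{n^{\sigma_x+\i t}\log n}\right|
\ll\sum_{p^k\leq x^2}\frac{1}{k}\,p^{k(\theta_3-1/2)}
\ll x^2 .
\]
Crude bounds of this kind suffice, because the exponent $\theta_3-1/2<0$ keeps every term bounded.

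\textbf{Assembling.} Inserting these bounds into Theorem~\ref{theorem 1} gives
\[
S(t,\pi)\ll x^2+\frac{x^2}{\log x}+\frac{\mathcal L}{\log x}.
\]
With $x=\mathcal L^{1/3}$, the first two terms are $\ll\mathcal L^{2/3}$. The last term is $\ll\mathcal L/\log\mathcal L$, and it dominates, since $\mathcal L^{2/3}=o(\mathcal L/\log\mathcal L)$. This gives $S(t,\pi)\ll\mathcal L/\log\mathcal L$. All implied constants are absolute and uniform in $\mu_\pi$ and $t$, because Theorem~\ref{theorem 1} is uniform in both.

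\textbf{Expected obstacles.} The only delicate point is the bookkeeping at the ramified prime $N$ and the uniformity: one must make sure the trivial bound on the prime-power sums does not depend on $\mu_\pi$. This holds because Theorem~\ref{theorem 1} already absorbs all archimedean dependence into $\mathcal Q_\pi(t)$, and because \eqref{KS bound} is uniform in $\pi$. The requirement that $N$ be sufficiently large only serves to ensure $x=\mathcal L^{1/3}\geq4$. At zero ordinates, the same bound passes to the average of the one-sided limits.
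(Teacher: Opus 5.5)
Your proposal is correct and follows essentially the same route as the paper: apply Theorem~\ref{theorem 1} with $x$ a fixed power of $\log\mathcal Q_\pi(t)$, and bound both prime sums trivially via $|C_\pi(p^k)|\le 3p^{5k/14}$ (with $|C_\pi(N^k)|\le 2$ at the ramified prime) and $\Lambda_x\le\Lambda$. The only difference is cosmetic: the paper keeps the saving $p^{-k/7}$, which gives the bound $x^{12/7}/\log x$ and the choice $x=(\log\mathcal Q_\pi(t))^{7/12}$, whereas you discard that saving, bound the sums by $x^2$ and take $x=(\log\mathcal Q_\pi(t))^{1/3}$, which works equally well.
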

\begin{proof}
The Satake bounds, including the ramified prime, give
$|C_\pi(p^k)|\leq3p^{5k/14}$.
Since $0\leq\Lambda_x(p^k)\leq\log p$, we have
\[
 |C_\pi(p^k)\Lambda_x(p^k)|\ll p^{5k/14}\log p.
\]
Thus,
\[
 \left|\Im\sum_{2\leq n\leq x^2}
 \frac{C_\pi(n)\Lambda_x(n)}{n^{\sigma_x+\i t}\log n}\right|
 \ll\sum_{p^k\leq x^2}\frac{p^{-k/7}}k
 \ll\frac{x^{12/7}}{\log x},
\]
and
\[
 \frac1{\log x}\left|\sum_{n\leq x^2}
 \frac{C_\pi(n)\Lambda_x(n)}{n^{\sigma_x+\i t}}\right|
 \ll\frac1{\log x}\sum_{p^k\leq x^2}p^{-k/7}\log p
 \ll\frac{x^{12/7}}{\log x}.
\]
The theorem follows by taking
$x=(\log\mathcal Q_\pi(t))^{7/12}$ in Theorem~\ref{theorem 1}.
\end{proof}
\section{Proofs of Theorem \ref{main-theorem} and Corollary~\ref{cor:gaussian-limit}}
\label{proof of the main th}
Throughout this section, we fix the test function $h$ as in Theorem~\ref{main-theorem} and write
\[
 \omega_{N,h}(\pi):=\frac{h(\mu_\pi)}{W_NL(1,\pi,\Ad)}.
\]
For sufficiently large $N$, these weights are nonnegative and
sum to $1$ by Corollary~\ref{cor:mass}.
For a parameter $x=N^{\delta/2}$ with 
a sufficiently small $0<\delta<1$ (such that $\log x\asymp\log N$), let
\bea\label{MR def}
\mathcal{M}(t,\pi):=\frac{1}{\pi}\Im\sum_{p\leq x^2}\frac{C_\pi(p)}{p^{1/2+\i t}} \qquad
\text{and}\qquad
\mathcal{R}(t,\pi):=S(t,\pi)-\mathcal{M}(t,\pi),
\eea
where, by \eqref{Log derivative coefficient},
\bna
C_\pi(p)=\begin{cases}
\alpha_p+\beta_p+\gamma_p, 
\,&\text{for a prime}\,\,p\not\mid N,\\
\alpha_N+\beta_N, 
\,&\text{for prime}\,\, p=N.
\end{cases}
\ena
For $p\leq x^2=N^\delta<N$ in $\mathcal M(t,\pi)$, 
we have $C_\pi(p)=A_\pi(1,p)$.

\subsection{The main term}\label{the main term}
Fix $n\in\mathbb N$ and choose $\delta=\delta(n)>0$ such that
\[
 0<\delta<\frac{32}{71n},\qquad x=N^{\delta/2}.
\]
With $A_0$ and $h$ fixed as in Remark~\ref{rem:A0}, we prove that
\begin{equation}\label{eq:main-moments}
 \sum_{\pi\in\cB(N)}\omega_{N,h}(\pi)\mathcal M(t,\pi)^n
 =C_n(\log\log N)^{n/2}
 +O_{h,n}\bigl((\log\log N)^{(n-1)/2}\bigr).
\end{equation}
Recall that
\[
 \mathcal M(t,\pi)^n
 =\frac{(-\i)^n}{(2\pi)^n}
 \left(\sum_{p\leq x^2}\frac{A_\pi(1,p)}{p^{1/2+\i t}}
 -\sum_{p\leq x^2}\frac{\overline{A_\pi(1,p)}}{p^{1/2-\i t}}\right)^n.
\]
On expanding the power and grouping by distinct primes, the terms,
apart from the common factor $(-\i)^n/(2\pi)^n$, have the form
\begin{equation}\label{eq:general-main-term}
 \frac{A_\pi(1,p_1)^{u(p_1)}\overline{A_\pi(1,p_1)}^{\,v(p_1)}}
 {p_1^{u(p_1)(1/2+\i t)}(-1)^{v(p_1)}p_1^{v(p_1)(1/2-\i t)}}
 \times\cdots\times
 \frac{A_\pi(1,p_\ell)^{u(p_\ell)}\overline{A_\pi(1,p_\ell)}^{\,v(p_\ell)}}
 {p_\ell^{u(p_\ell)(1/2+\i t)}(-1)^{v(p_\ell)}p_\ell^{v(p_\ell)(1/2-\i t)}},
\end{equation}
where $p_1<\cdots<p_\ell\leq x^2$, $u(p_j),v(p_j)\geq0$,
$u(p_j)+v(p_j)\geq1$, and $\sum_j(u(p_j)+v(p_j))=n$.
Here $p\leq x^2<N$, so every prime involved is unramified.
By the Hecke relations and \cite[Proposition~5.1]{LL}, we have
\begin{equation}\label{eq:Hecke-power-expansion}
 A_\pi(1,p)^u=\sum_{\substack{a,b\geq0\\a+b\leq u}}
 c_{a,b}(u)A_\pi(p^a,p^b),\qquad
 c_{a,b}(u)\in\mathbb Z_{\geq0},\qquad
 \sum_{a,b}c_{a,b}(u)\leq3^u.
\end{equation}
The coefficients $c_{a,b}(u)$ are independent of $p$, $N$ and $\pi$, and
\begin{equation}\label{eq:Hecke-power-congruence}
 c_{a,b}(u)\neq0\quad\Longleftrightarrow\quad(a,b)\in E_u,
\end{equation}
where
\[
 E_u=\big\{(n_2-n_1,n_3-n_2):
 n_3\geq n_2\geq n_1\geq0,\ n_j\in\mathbb Z,\ n_1+n_2+n_3=u\big\}.
\]
In particular, $(a,b)\in E_u$ implies $2a+b\equiv u\pmod3$.

Now we discuss the contribution from the general term \eqref{eq:general-main-term} 
in the following cases.

Case\,(\uppercase\expandafter{\romannumeral 1}):
\emph{Suppose $u(p_{j_0})\not\equiv v(p_{j_0})\pmod3$ for some $j_0$.}
By \eqref{eq:Hecke-power-expansion}, we have
\[
 A_\pi(1,p_{j_0})^{u(p_{j_0})}
 =\sum_{\substack{a,b\geq0\\a+b\leq u(p_{j_0})}}
 c_{a,b}(u(p_{j_0}))A_\pi(p_{j_0}^a,p_{j_0}^b)
\]
and
\[
 \overline{A_\pi(1,p_{j_0})}^{\,v(p_{j_0})}
 =\sum_{\substack{a,b\geq0\\a+b\leq v(p_{j_0})}}
 c_{a,b}(v(p_{j_0}))\overline{A_\pi(p_{j_0}^a,p_{j_0}^b)}.
\]
There is no pair $(a,b)$ for which both coefficients are nonzero.
Otherwise, \eqref{eq:Hecke-power-congruence} would give
$(a,b)\in E_{u(p_{j_0})}\cap E_{v(p_{j_0})}$, and hence
\[
 u(p_{j_0})\equiv2a+b\equiv v(p_{j_0})\pmod3,
\]
contrary to the assumption.
Expanding at every prime and using multiplicativity, the numerator
of \eqref{eq:general-main-term} is therefore a linear combination of
$\overline{A_\pi(m_1,m_2)}A_\pi(n_1,n_2)$ with
$(m_1,m_2)\neq(n_1,n_2)$ and total coefficient sum at most $3^n$.
Moreover, in this case,
\[
 Q=m_1m_2n_1n_2\leq\prod_{j=1}^{\ell}p_j^{u(p_j)+v(p_j)}
 \leq x^{2n}=N^{n\delta}<N.
\]
Thus $(Q,N)=1$, and Lemma~\ref{lem:new-orth} applies with $\eta=1$.
Since $\delta_{m_1,n_1}\delta_{m_2,n_2}=0$,
Lemma~\ref{lem:new-orth} and Corollary~\ref{cor:mass}, together with
$2\theta_3<1$, bound the contribution from the general term
\eqref{eq:general-main-term} to \eqref{eq:main-moments} by
\begin{equation}\label{eq:case-I-bound}
\begin{aligned}
 &\ll_{h,n,\varepsilon}
 \sum_{p_1<\cdots<p_\ell\leq x^2}
 p_1^{-(u(p_1)+v(p_1))/2}\cdots
 p_\ell^{-(u(p_\ell)+v(p_\ell))/2}
 \left(N^{-1+\varepsilon}x^{2n(\theta_2+\varepsilon)}
       +N^{-2}x^{2n}\right)\\
 &\ll_{h,n,\varepsilon}
 \left(\sum_{p\leq x^2}p^{-1/2}\right)^\ell
 \left(N^{-1+\varepsilon}x^{2n(\theta_2+\varepsilon)}
       +N^{-2}x^{2n}\right)\\
 &\ll_{h,n,\varepsilon}
 N^{-1+\varepsilon+n\delta(1/2+\theta_2+\varepsilon)}
 +N^{-2+3n\delta/2},
\end{aligned}
\end{equation}
which is negligible by our choice of $\delta$.
Here we used $\ell\leq n$ and $\sum_{p\leq x^2}p^{-1/2}\ll x$.

Case\,(\uppercase\expandafter{\romannumeral 2}):
\emph{Suppose $u(p_j)\equiv v(p_j)\pmod3$ for all $j$, and
$u(p_{j_0})\geq2$ or $v(p_{j_0})\geq2$ for some $j_0$.}
Then $u(p_j)+v(p_j)\geq2$ for every $j$, and
$u(p_{j_0})+v(p_{j_0})\geq3$, so
$\ell\leq\lfloor(n-1)/2\rfloor$.
Expanding the numerator of \eqref{eq:general-main-term} by
\eqref{eq:Hecke-power-expansion} and multiplicativity, terms
$\overline{A_\pi(m_1,m_2)}A_\pi(n_1,n_2)$ with
$(m_1,m_2)=(n_1,n_2)$ may now occur.
By Lemma~\ref{lem:new-orth} and Corollary~\ref{cor:mass},
their total diagonal contribution for each fixed tuple of primes
is $O_n(1)$, while the accumulated trace error, including the
normalization error, is bounded as in \eqref{eq:case-I-bound}.
Hence the contribution from the general term \eqref{eq:general-main-term} 
to \eqref{eq:main-moments} is
\[
\begin{aligned}
 &\ll_{h,n}\sum_{p_1<\cdots<p_\ell\leq x^2}
 p_1^{-(u(p_1)+v(p_1))/2}\cdots
 p_\ell^{-(u(p_\ell)+v(p_\ell))/2}\\
 &\ll_{h,n}\bigg(\sum_{p\leq x^2}
 p^{-1}\bigg)^\ell
 \ll_{h,n}(\log\log N)^{\lfloor(n-1)/2\rfloor},
\end{aligned}
\]
where we used the classical estimate
\bna
\sum_{p\leq x}\frac{1}{p}=\log\log x+O(1).
\ena
In particular, Cases~(\uppercase\expandafter{\romannumeral 1}) and~(\uppercase\expandafter{\romannumeral 2}) show that, for odd $n$,
\[
 \sum_{\pi\in\cB(N)}\omega_{N,h}(\pi)\mathcal M(t,\pi)^n
 \ll_{h,n}(\log\log N)^{(n-1)/2}.
\]

Case\,(\uppercase\expandafter{\romannumeral 3}):
\emph{Suppose $u(p_j)=v(p_j)=1$ for all $j$.}
Then $n=2m$ and $\ell=m$. The total contribution from these terms to \eqref{eq:main-moments} is
\[
 \binom{2m}{m}(m!)^2\frac{(-1)^m(-\i)^{2m}}{(2\pi)^{2m}}
 \sum_{p_1<\cdots<p_m\leq x^2}\frac1{p_1\cdots p_m}
 \sum_{\pi\in\cB(N)}\omega_{N,h}(\pi)
 \prod_{j=1}^m|A_\pi(1,p_j)|^2.
\]
By the Hecke relations,
\[
 \prod_{j=1}^m|A_\pi(1,p_j)|^2
 =|A_\pi(1,p_1\cdots p_m)|^2.
\]
Applying Lemma~\ref{lem:new-orth} and Corollary~\ref{cor:mass} with
$(m_1,m_2)=(n_1,n_2)=(1,p_1\cdots p_m)$, we have
\[
\begin{aligned}
 &\frac{(2m)!}{(2\pi)^{2m}}
 \sum_{p_1<\cdots<p_m\leq x^2}\frac1{p_1\cdots p_m}\times\left(1+O_{h,\varepsilon}\left(
 N^{-1+\varepsilon}(p_1\cdots p_m)^{2\theta_2+2\varepsilon}
 +N^{-2}(p_1\cdots p_m)^2\right)\right)\\
 &=\frac{(2m)!}{m!(2\pi)^{2m}}
 \sum_{\substack{p_1,\ldots,p_m\leq x^2\\p_j\ \mathrm{distinct}}}
 \frac1{p_1\cdots p_m}+O_{h,n,\varepsilon}\bigg(
 N^{-1+\varepsilon}
 \sum_{\substack{p_1,\ldots,p_m\leq x^2\\p_j\ \mathrm{distinct}}}
(p_1\cdots p_m)^{-1+2\theta_2+2\varepsilon}\bigg)\\
 &\quad+O_{h,n}\bigg(
 N^{-2}\sum_{\substack{p_1,\ldots,p_m\leq x^2\\p_j\ \mathrm{distinct}}}
 p_1\cdots p_m\bigg)\\
 &=\frac{(2m)!}{m!(2\pi)^{2m}}\bigl(\log\log N+O_n(1)\bigr)^m
 +O_{h,n,\varepsilon}\left(
 N^{-1+\varepsilon}x^{4m(\theta_2+\varepsilon)}+N^{-2}x^{4m}\right)\\
 &=C_{2m}(\log\log N)^m
 +O_{h,n}((\log\log N)^{m-1}).
\end{aligned}
\]

Now the asymptotic formula \eqref{eq:main-moments} follows from
Cases (\uppercase\expandafter{\romannumeral 1})--(\uppercase\expandafter{\romannumeral 3}).


\subsection{The remainder term}
We first record a bound for the error term in
Theorem~\ref{theorem 1}.
\begin{lemma}
\label{lem:conductor-error-moments}
For fixed $t$, $r>0$, and $0<\delta<1$, let $x=N^{\delta/2}$.
Then, as $N$ tends to infinity through primes,
\begin{equation*}\label{eq:conductor-error-moments}
 \sum_{\pi\in\cB(N)}\omega_{N,h}(\pi)
 \left(\frac{\log\mathcal Q_\pi(t)}{\log x}\right)^r
 \ll_{h,t,r,\delta}1.
\end{equation*}
\end{lemma}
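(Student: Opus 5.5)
Write $\mathcal Q_\pi(t)=N\prod_{j=1}^3(2+|t-\Im(\mu_j)|)$. Since $\log x=\frac{\delta}{2}\log N$, we have
\[
\frac{\log\mathcal Q_\pi(t)}{\log x}=\frac{2}{\delta}+\frac{2}{\delta\log N}\sum_{j=1}^3\log\bigl(2+|t-\Im(\mu_j)|\bigr).
\]
By the elementary inequality $(a+b)^r\ll_r a^r+b^r$, the $r$-th power is
\[
\ll_{r,\delta} 1+(\log N)^{-r}\Bigl(\sum_j\log(2+|t-\Im(\mu_j)|)\Bigr)^r.
\]
The weights $\omega_{N,h}(\pi)$ are nonnegative and sum to $1$ for large $N$ (Corollary~\ref{cor:mass}), so the constant term contributes $O_{r,\delta}(1)$. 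Using $\log(2+|t-\Im\mu_j|)\le\log(2+|t|)+\log(2+\|\mu_\pi\|)$, it remains to show that
\[
\sum_{\pi\in\cB(N)}\omega_{N,h}(\pi)\log^r\bigl(2+\|\mu_\pi\|\bigr)\ll_{h,r}(\log N)^r,
\]
and in fact I would aim for the stronger bound $O_{h,r}(1)$.

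For this bound I would exploit the decay of $h$ rather than any bound for individual spectral parameters. The natural device is to compare with a second admissible test function that dominates $h\cdot\log^r(2+\|\mu\|)$ on the automorphic spectrum. Concretely, choose $h_1\in\widetilde{\mathcal H}(A_0)$ that is nonnegative on the automorphic spectrum, satisfies $h(\mu)\log^r(2+\|\mu\|)\le h_1(\mu)$ there, and is fixed independently of $N$. Then apply Lemma~\ref{lem:new-orth} with $m_1=m_2=n_1=n_2=1$ to $h_1$. This gives
\[
\sum_{\pi}\frac{h_1(\mu_\pi)}{L(1,\pi,\Ad)}=P_NH_{h_1}/c_{\cusp}+O(N^{1+\eps})\ll_{h_1}N^2.
\]
Dividing by $W_N\asymp_h N^2$ then yields the desired $O(1)$. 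The lemma's proof needs $h_1$ only to lie in $\widetilde{\mathcal H}(A_0)$; positivity of $H_{h_1}$ is not needed for an upper bound.

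The main obstacle is producing $h_1$: since $\log^r(2+\|\mu\|)$ is not holomorphic, one cannot simply take $h_1=h\log^r$. I would try the following two approaches.

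\textbf{First attempt.} Pick an $h$ of the product shape of \cite[Eq.\,(5.4)]{BC} squared, say $h=h_0^2$. Then take $h_1=h_0^2\,P(\mu)$, where $P$ is a Weyl-invariant polynomial such as $P(\mu)=1+\bigl(\mu_1^2+\mu_2^2+\mu_3^2\bigr)^{2}$ suitably sign-adjusted. For $\mu$ in the automorphic spectrum (the tempered plane together with the Kim--Sarnak strip $|\Re\mu_j|\le 5/14$), $-\sum\mu_j^2$ equals $\|\Im\mu\|^2$ up to an $O(1)$ error, so $P(\mu)\gg\log^r(2+\|\mu\|)$ there. Multiplying by a polynomial preserves holomorphy, rapid decay, Weyl invariance and the vanishing condition, so $h_1\in\widetilde{\mathcal H}(A_0)$.

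\textbf{Fallback.} If $h$ is a general member of the class, I would instead use the fact that $h_1(\mu):=h(\mu)\cdot\bigl(1-\sum_j\mu_j^2\bigr)^{k}$ again lies in the class. Nonnegativity of $h_1$ is not really needed: it suffices to bound $\bigl|\sum\omega_{N,h}(\pi)\cdot(\cdot)\bigr|$ by splitting at $\|\mu_\pi\|\le N$ (contributing at most $\log^r(2N)/\log^r N\ll 1$) and handling the tail $\|\mu_\pi\|>N$ via the rapid decay of $h$ together with the Weyl law count for $\mathcal B(N)$. This last step is the delicate one if it must be done without $h_1$.
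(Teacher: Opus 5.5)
Your proof is correct and is essentially the paper's argument, which is your fallback with $k=1$: multiply $h$ by $2-\sum_j\mu_j^2$, keep it in $\widetilde{\mathcal H}(A_0)$, bound $(\log\mathcal Q_\pi(t)/\log x)^r\ll 1+\|\mu_\pi\|^2\le 2-\sum_j\mu_j^2$ on the spectrum, then apply Lemma~\ref{lem:new-orth} with all indices $1$ and use $W_N\asymp N^2$. Make explicit that $\sum_j\mu_j^2$ is real on the automorphic spectrum by unitarity ($\{\mu_j\}=\{-\overline{\mu_j}\}$), not merely because $|\Re\mu_j|\le 5/14$; with that and the standing hypothesis $h\ge0$ on the spectrum, $h_1$ is automatically nonnegative and dominating, so you need neither the special shape $h=h_0^2$ nor the splitting at $\|\mu_\pi\|\le N$.
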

\begin{proof}
With $\|\mu_\pi\|^2=\sum_{j=1}^3|\mu_j|^2$, we have
\[
 \mathcal Q_\pi(t)=N\prod_{j=1}^3(2+|t-\Im(\mu_j)|)
 \leq N(2+|t|)^3(1+\|\mu_\pi\|)^3.
\]
Since $x=N^{\delta/2}$ and $N\geq3$, it follows that
\[
\begin{aligned}
 \left(\frac{\log\mathcal Q_\pi(t)}{\log x}\right)^r
 &\ll_{t,r,\delta}\bigl(1+\log(1+\|\mu_\pi\|)\bigr)^r
 \ll_{t,r,\delta}1+\|\mu_\pi\|^2.
\end{aligned}
\]
By unitarity, $\{\mu_j\}=\{-\overline{\mu_j}\}$
as multisets, so $\sum_j\mu_j^2$ is real.
Using $|\Re(\mu_j)|\leq5/14$, we obtain
\[
2(1+\|\mu_\pi\|)^2\geq2-\sum_{j=1}^3\mu_j^2
 =2+\|\mu_\pi\|^2-2\sum_{j=1}^3(\Re(\mu_j))^2
 \geq1+\|\mu_\pi\|^2.
\]
Further, the function $\bigl(2-\sum_{j=1}^3\mu_j^2\bigr)h(\mu)$ belongs
to $\widetilde{\mathcal H}(A_0)$ and is fixed independently of $N$. Applying Lemma~\ref{lem:new-orth}
with this test function and $m_1=m_2=n_1=n_2=1$ gives
\[
 0\leq\sum_{\pi\in\cB(N)}
 \frac{\bigl(2-\sum_{j=1}^3\mu_j^2\bigr)h(\mu_\pi)}
 {L(1,\pi,\Ad)}\ll_h N^2.
\]
Finally, $W_N\asymp_h N^2$ by Corollary~\ref{cor:mass}.
This bound and the nonnegativity of the weights yield
\[
\begin{aligned}
 \sum_{\pi\in\cB(N)}\omega_{N,h}(\pi)
 \left(\frac{\log\mathcal Q_\pi(t)}{\log x}\right)^r\ll_{t,r,\delta}\frac1{W_N}
 \sum_{\pi\in\cB(N)}
 \frac{\bigl(2-\sum_{j=1}^3\mu_j^2\bigr)h(\mu_\pi)}
 {L(1,\pi,\Ad)}
 \ll_{h,t,r,\delta}1.
\end{aligned}
\]
\end{proof}

With $x=N^{\delta/2}$ and $\delta=\delta(n)$ fixed as in
Section~\ref{the main term}, we prove
\begin{equation}\label{eq:remainder-moment}
 \sum_{\pi\in\cB(N)}\omega_{N,h}(\pi)|\mathcal R(t,\pi)|^{2n}
 \ll_{h,t,n}1.
\end{equation}
By the definition \eqref{MR def} and Theorem \ref{theorem 1}, we have
\begin{equation}\label{R(t)}
\begin{aligned}
 \mathcal R(t,\pi)=&\,S(t,\pi)
 -\frac1\pi\Im\sum_{p\leq x^2}
 \frac{C_\pi(p)}{p^{1/2+\i t}}\\
 =&\,\frac1\pi\Im\sum_{p\leq x^2}
 \frac{C_\pi(p)
       \bigl(\Lambda_x(p)p^{1/2-\sigma_x}-\Lambda(p)\bigr)}
 {p^{1/2+\i t}\log p}+\frac1\pi\Im\sum_{k=2}^{\infty}\sum_{p^k\leq x^2}
 \frac{C_\pi(p^k)\Lambda_x(p^k)}
 {p^{k(\sigma_x+\i t)}\log p^k}\\
 &+O\left(\frac1{\log x}
 \left|\sum_{k=1}^{\infty}\sum_{p^k\leq x^2}
 \frac{C_\pi(p^k)\Lambda_x(p^k)}
 {p^{k(\sigma_x+\i t)}}\right|\right)
 +O\left(\frac{\log\mathcal Q_\pi(t)}{\log x}\right).
\end{aligned}
\end{equation}
Note that all primes with $p^k\leq x^2<N$ are unramified. 
By the bound
$|C_\pi(p^k)|\leq3p^{5k/14}$,
\[
 \sum_{k\geq8}\sum_{p^k\leq x^2}
 \frac{|C_\pi(p^k)|\Lambda_x(p^k)}{p^{k\sigma_x}}
 \leq3\sum_p(\log p)\sum_{k\geq8}p^{-k/7}
 \ll\sum_p\frac{\log p}{p^{8/7}}<\infty.
\]
Thus the terms with $k\geq8$ in \eqref{R(t)}
contribute $O(1)$, uniformly in $\pi$.
Separating the prime terms from the higher prime powers, using
$\sum_\pi\omega_{N,h}(\pi)=1$ and Lemma \ref{lem:conductor-error-moments}, we obtain
\begin{equation}\label{eq:remainder-decomposition}
\begin{aligned}
 \sum_{\pi\in\cB(N)}&\omega_{N,h}(\pi)|\mathcal R(t,\pi)|^{2n}\\\ll_{h,t,n}&
 \sum_{\pi\in\cB(N)}\omega_{N,h}(\pi)
 \left|\Im\sum_{p\leq x^2}
 \frac{C_\pi(p)
       \bigl(\Lambda_x(p)p^{1/2-\sigma_x}-\Lambda(p)\bigr)}
 {p^{1/2+\i t}\log p}\right|^{2n}\\
 +\,&\sum_{k=2}^7\sum_{\pi\in\cB(N)}\omega_{N,h}(\pi)
 \left|\Im\sum_{p^k\leq x^2}
 \frac{C_\pi(p^k)\Lambda_x(p^k)}
 {p^{k(\sigma_x+\i t)}\log p^k}\right|^{2n}\\
 +\,&\frac1{(\log x)^{2n}}
 \sum_{k=1}^7\sum_{\pi\in\cB(N)}\omega_{N,h}(\pi)
 \left|\sum_{p^k\leq x^2}
 \frac{C_\pi(p^k)\Lambda_x(p^k)}{p^{k(\sigma_x+\i t)}}\right|^{2n}+1.
\end{aligned}
\end{equation}
For the three sums in \eqref{eq:remainder-decomposition}, we have
\[
\begin{aligned}
 &\left|\Im\sum_{p\leq x^2}
 \frac{C_\pi(p)\bigl(\Lambda_x(p)p^{1/2-\sigma_x}-\Lambda(p)\bigr)}
 {p^{1/2+\i t}\log p}\right|^{2n}\\
 &=\frac{(-1)^n}{2^{2n}}\Biggl(
 \sum_{p\leq x^2}
 \frac{C_\pi(p)\bigl(\Lambda_x(p)p^{1/2-\sigma_x}-\Lambda(p)\bigr)}
 {p^{1/2+\i t}\log p}-\sum_{p\leq x^2}
 \frac{\overline{C_\pi(p)}\bigl(\Lambda_x(p)p^{1/2-\sigma_x}-\Lambda(p)\bigr)}
 {p^{1/2-\i t}\log p}\Biggr)^{2n},
\end{aligned}
\]
where $
 \Lambda_x(p)p^{1/2-\sigma_x}-\Lambda(p)
 =O((\log p)^2/\log x),
$
and, for $1\leq k\leq7$,
\[
\begin{aligned}
 &\left|\Im\sum_{p^k\leq x^2}
 \frac{C_\pi(p^k)\Lambda_x(p^k)}
 {p^{k(\sigma_x+\i t)}\log p^k}\right|^{2n}=\frac{(-1)^n}{2^{2n}}
 \left(\sum_{p^k\leq x^2}
\frac{C_\pi(p^k)\Lambda_x(p^k)}
 {p^{k(\sigma_x+\i t)}\log p^k}
 -\sum_{p^k\leq x^2}\frac{\overline{C_\pi(p^k)}\Lambda_x(p^k)}
 {p^{k(\sigma_x-\i t)}\log p^k}\right)^{2n},
\end{aligned}
\]
and
\[
\begin{aligned}
 \left|\sum_{p^k\leq x^2}
 \frac{C_\pi(p^k)\Lambda_x(p^k)}{p^{k(\sigma_x+\i t)}}\right|^{2n}&\ll_n
 \left|\Re\sum_{p^k\leq x^2}
 \frac{C_\pi(p^k)\Lambda_x(p^k)}{p^{k(\sigma_x+\i t)}}\right|^{2n}
 +\left|\Im\sum_{p^k\leq x^2}
 \frac{C_\pi(p^k)\Lambda_x(p^k)}{p^{k(\sigma_x+\i t)}}\right|^{2n}\\
 &\ll_n\left(\sum_{p^k\leq x^2}
\frac{C_\pi(p^k)\Lambda_x(p^k)}{p^{k(\sigma_x+\i t)}}
 +\sum_{p^k\leq x^2}\frac{\overline{C_\pi(p^k)}\Lambda_x(p^k)}{p^{k(\sigma_x-\i t)}}\right)^{2n}\\
 &+(-1)^n\left(\sum_{p^k\leq x^2}
\frac{C_\pi(p^k)\Lambda_x(p^k)}{p^{k(\sigma_x+\i t)}}
 -\sum_{p^k\leq x^2}\frac{\overline{C_\pi(p^k)}\Lambda_x(p^k)}{p^{k(\sigma_x-\i t)}}\right)^{2n}.
\end{aligned}
\]
Now
\[
 C_\pi(p^k)=\alpha_p^k+\beta_p^k+\gamma_p^k
\]
is a symmetric polynomial in the Satake parameters.
Since their elementary symmetric polynomials are
$A_\pi(1,p)$, $A_\pi(p,1)=\overline{A_\pi(1,p)}$, and $1$,
Newton's identities express $C_\pi(p^k)$ as a polynomial of total
degree at most $k$ in $A_\pi(1,p)$ and its conjugate.

We now apply the argument in Cases~(\uppercase\expandafter{\romannumeral 1})--(\uppercase\expandafter{\romannumeral 3})
of Section~\ref{the main term}.
In the third sum in \eqref{eq:remainder-decomposition}, we include the factor $1/\log x$ in each coefficient, so the coefficients of all three sums
are uniformly bounded. For fixed $k$ in the second and third sums,
expanding the $2n$-th powers gives at most $O_n(x^{4n/k})$ terms;
the first sum satisfies the same bound with $k=1$.
By \eqref{eq:Hecke-power-expansion} and
multiplicativity, each term reduces to Hecke pairs
$\overline{A_\pi(m_1,m_2)}A_\pi(n_1,n_2)$ with total absolute
coefficient $O_n(1)$ and
\[
 Q=m_1m_2n_1n_2\leq x^{4n}=N^{2n\delta}<N.
\]
Since $x^{4n/k}\leq x^{4n}$, Lemma~\ref{lem:new-orth} and
Corollary~\ref{cor:mass} give the total error 
\[
 O_{h,n,\varepsilon}\left(
 N^{-1+\varepsilon}x^{4n(1+\theta_2+\varepsilon)}+N^{-2}x^{8n}\right)
 =O_{h,n}(N^{-c})
\]
for some $c>0$ by our choice of $\delta$.
It remains to estimate the diagonal terms. For $k=1,2$, the identities
\[
 C_\pi(p)=A_\pi(1,p),\qquad
 C_\pi(p^2)=A_\pi(1,p^2)-A_\pi(p,1)
\]
show that a prime occurring only once gives no diagonal contribution.
The coefficients of the first sum and of the $k=1$ part
of the third sum are bounded by $(\log p)/(\sqrt p\log x)$,
and those of the $k=2$ parts of the second and third sums are bounded
by $1/p$. Their squared sums satisfy
\[
 \frac1{(\log x)^2}\sum_{p\leq x^2}\frac{(\log p)^2}{p}\ll1,
 \qquad\text{and}\quad
 \sum_{p\leq x}\frac1{p^2}\ll1.
\]
Since every prime in a diagonal term occurs at least twice,
these bounds give a total diagonal contribution $O_n(1)$.
For $3\leq k\leq7$, the coefficients in both sums are
bounded by $2p^{-k/2}/k$. Since $\sum_p p^{-k/2}<\infty$,
the sum of the absolute weights over all $2n$-tuples is
bounded. 
Together with the error estimate above, this shows that the three
spectral sums in \eqref{eq:remainder-decomposition}, including
their displayed factors, are $O_{h,n}(1)$.
Hence we obtain \eqref{eq:remainder-moment}.

\subsection{Proof of Theorem~\ref{main-theorem}}
By the binomial
theorem, we have
\[
\begin{aligned}
 \sum_{\pi\in\mathcal{B}(N)}\omega_{N,h}(\pi)S(t,\pi)^n
 &=\sum_{\pi\in\mathcal{B}(N)}\omega_{N,h}(\pi)\mathcal M(t,\pi)^n\\
 &\qquad+O_n\!\left(\sum_{\ell=1}^n\sum_{\pi\in\mathcal{B}(N)}
 \omega_{N,h}(\pi)|\mathcal M(t,\pi)|^{n-\ell}|\mathcal R(t,\pi)|^\ell\right),
\end{aligned}
\]
where $\mathcal M(t,\pi)$ and $\mathcal R(t,\pi)$ are defined in \eqref{MR def}.
Note that the argument in Section~\ref{the main term} also applies to the
$2n$-th moment for our choice of $\delta$.
Hence, for $1\leq\ell\leq n$, H\"older's inequality,
\eqref{eq:main-moments}, and \eqref{eq:remainder-moment} yield
\[
\begin{aligned}
 \sum_\pi\omega_{N,h}(\pi)|\mathcal M(t,\pi)|^{n-\ell}|\mathcal R(t,\pi)|^\ell
 &\leq
 \left(\sum_\pi\omega_{N,h}(\pi)|\mathcal M(t,\pi)|^{2n}\right)^{\frac{n-\ell}{2n}}
 \left(\sum_\pi\omega_{N,h}(\pi)|\mathcal R(t,\pi)|^{2n}\right)^{\frac{\ell}{2n}}\\
 &\ll_{h,t,n}(\log\log N)^{(n-\ell)/2}\ll_{h,t,n}(\log\log N)^{(n-1)/2}.
\end{aligned}
\]
This proves Theorem~\ref{main-theorem}.

\subsection{Proof of Corollary~\ref{cor:gaussian-limit}}
For each fixed $n\in\mathbb N$, the $n$-th moment of $\nu_{N,t,h}$ is
\[
 \int_{\mathbb R}u^n\,\mathrm{d}\nu_{N,t,h}(u)
 =\frac1{W_N}\sum_{\pi\in\cB(N)}\frac{h(\mu_\pi)}{L(1,\pi,\Ad)}
 \left(\frac{S(t,\pi)}{\sqrt{\log\log N}}\right)^n.
\]
By Remark~\ref{rem:A0}, the same $A_0$ and $h$ may be used for all
moment orders. Theorem~\ref{main-theorem} therefore gives
\[
 \lim_{\substack{N\to\infty\\N\ {\rm prime}}}
 \int_{\mathbb R}u^n\,\mathrm{d}\nu_{N,t,h}(u)
 =C_n=\sqrt{\pi}\int_{\mathbb R}u^n e^{-\pi^2u^2}\,\mathrm{d}u.
\]
Since the Gaussian measure $\nu$ is determined by its moments,
the method of moments \cite[Theorem~30.2]{Billingsley} yields the
claimed weak convergence. Corollary~\ref{cor:gaussian-limit} then follows from the
continuity of the Gaussian distribution function.

\section{Proof of Lemma~\ref{lem:new-orth}}\label{sec:proof-new-orth}
Applying Lemma~\ref{thm:prime-level-kuznetsov} to the pairs
$(m_1,m_2)$ and $(n_1,n_2)$ gives
\[
\frac{P_N}{N^2}\cC_{\new}=
 P_N H_h\,
 \delta_{n_1,m_1}\delta_{n_2,m_2}+\cS_4+\cS_5+\cS_6
 -\cC_{\old}-\cE_{\min}
 -\frac{P_N}{N^2}\cE_{\max,\new}-\cE_{\max,\old}.
\]
We will estimate each term on the right-hand side in turn.
The bounds below are uniform in $N$ and 
$m_1,m_2,n_1,n_2$, with implied constants depending at most on
$h$, $\eta$, and $\eps$.

\subsection{Short Weyl terms $\cS_4$ and $\cS_5$.}
\label{subsec:short-weyl}
Recall \eqref{gs}. The arithmetic conditions include
$ND_2\mid D_1$ and $m_2D_1=n_1D_2^2$.
Write
\[
 g=(m_2,n_1),\qquad m_2=ga,\qquad n_1=gb,
 \qquad (a,b)=1.
\]
By $ND_2\mid D_1$, write $D_1=NrD_2$ with $r\geq1$.
Then $arN=bD_2.$
Since $(a,b)=(b,N)=1$, we have $b\mid r$.  Writing $r=bd$ gives 
\[
 D_2=aNd,\qquad
 D_1=abN^2d^2,
\]
and therefore
\[
 D_1D_2=a^2bN^3d^3\geq N^3d^3.
\]
The argument $y$ of the $w_4$ transform is consequently bounded by
\[
 |y|=\frac{m_1m_2n_2}{D_1D_2}
 \leq\frac{Q}{N^3d^3}.
\]
Since $Q\leq N^\eta$ and $\eta<2$, using
\eqref{eq:Phi-w4-bound}, $P_N\asymp N^2$, and the trivial bound
\[
 |\widetilde S(-\epsilon n_2,m_2,m_1;D_2,D_1)|
\leq D_1D_2,
\]
we obtain
\[
|\cS_4|
 \mathrel\ll_{h,A}
 N^2\sum_{d\geq1}
 \left(\frac{Q}{N^3d^3}\right)^A
 \mathrel\ll_{h,A} Q^A N^{2-3A}.
\]
Taking $A>4/(3-\eta)$ gives $O_{h,\eta}(N^{-2})$.
For $\cS_5$, write $m_1=ga$, $n_2=gb$ with $(a,b)=1$. The
constraints give $D_1=aNd$ and $D_2=abN^2d^2$ for some $d\geq1$.
The same argument with the $w_5$ bound in
\eqref{eq:Phi-w4-bound} gives
$|\cS_5|\ll_{h,A}Q^A N^{2-3A}\ll_{h,\eta}N^{-2}$.

\subsection{Long Weyl term $\cS_6$.}
\label{subsec:long-weyl}
Let $y_1,y_2$ be the arguments of $\Phi_{w_6}$ in \eqref{gs}.
Then $|y_1y_2|=Q/(D_1D_2)$ and, since $D_1,D_2\geq1$,
$1+|y_1|+|y_2|\ll QD_1D_2$.  Moreover,
\[
 \min\{|y_1^2y_2|,|y_1y_2^2|\}
 \leq |y_1y_2|^{3/2}.
\]
For fixed $A>0$ and sufficiently large $A_0$,
\eqref{eq:Phi-w6-small-bound} gives
\begin{equation}\label{eq:w6-moduli-transform}
 |\Phi_{w_6}(y_1,y_2)|
 \ll_{h,A,\varepsilon}Q^{3A/2+\varepsilon}
 (D_1D_2)^{-3A/2+\varepsilon}.
\end{equation}
The factor $(QD_1D_2)^\varepsilon$ accounts for the loss
$(1+|y_1|+|y_2|)^\varepsilon$, also when the moduli are unbalanced.

For $X_1,X_2\geq N$, the averaged Kloosterman bound
\cite[Eq.\,(4.2)]{BBM}, applied to the arguments in \eqref{gs}, states that
\begin{equation}\label{average bound}
\begin{split}
 \sum_{\substack{N\mid D_1,\ D_1\leq X_1\\
                 N\mid D_2,\ D_2\leq X_2}}
 |S^{(N)}(\epsilon_2n_2,\epsilon_1n_1,m_1,m_2;D_1,D_2)|
 \ll_\varepsilon\frac{(X_1X_2)^{3/2+\varepsilon}Q^\varepsilon}{N^{3/2}}.
\end{split}
\end{equation}
Its coprimality hypothesis is exactly $(Q,N)=1$.
Split $D_i$ into dyadic ranges $D_i\asymp X_i=2^{j_i}N$.
Combining \eqref{eq:w6-moduli-transform}, 
\eqref{average bound} with $P_N\asymp N^2$, we obtain
\[
 |\cS_6(X_1,X_2)|
 \ll_{h,A,\varepsilon}N^{1/2}Q^{3A/2+\varepsilon}
 (X_1X_2)^{(1-3A)/2+\varepsilon}.
\]
Summing the dyadic blocks for sufficiently large $A$ gives
\[
 |\cS_6|\ll_{h,A,\varepsilon}
 N^{3/2-3A+\varepsilon}Q^{3A/2+\varepsilon}
 \ll_{h,A,\varepsilon,\eta}
 N^{3/2-3A(1-\eta/2)+\varepsilon}.
\]
As $\eta<2$, a sufficiently large fixed $A$ and a sufficiently
small $\varepsilon>0$ make this $O_{h,\eta}(N^{-2})$.
Choose $A_0$ sufficiently large, depending only on
$\eta$, so that the transform bounds used for
$\cS_4,\cS_5$, and $\cS_6$ hold with the fixed exponents above.

\subsection{Cuspidal oldforms.}
For a level $1$ representation $\Pi$, \cite[Proposition~5]{BC} gives the three unorthogonalized
translates at an index $(m,n)$ as
\begin{align*}
 A_\Pi^{(0)}(m,n)&=A_\Pi(m,n),\\
 A_\Pi^{(1)}(m,n)&=N^{1/2}
 \left(A_\Pi(m/N,Nn)+A_\Pi(m,n/N)\right),\\
 A_\Pi^{(2)}(m,n)&=N A_\Pi(m/N,n).
\end{align*}
A coefficient with a non-integral argument is zero. Since each of
$n_1,n_2,m_1,m_2$ is coprime to $N$, both translated coefficients
$A_\Pi^{(1)}$ and $A_\Pi^{(2)}$ vanish at the two indices used here.
\cite[Corollary~4]{BC} gives the coefficients $B_\Pi^{(j)}$ as follows:
\begin{align*}
 B_\Pi^{(0)}(m,n)&=A_\Pi^{(0)}(m,n),\\
 B_\Pi^{(1)}(m,n)&=(1+c_{11})A_\Pi^{(1)}(m,n)+c_{10}A_\Pi^{(0)}(m,n),\\
 B_\Pi^{(2)}(m,n)&=(1+c_{22})A_\Pi^{(2)}(m,n)
       +c_{21}A_\Pi^{(1)}(m,n)+c_{20}A_\Pi^{(0)}(m,n).
\end{align*}
Consequently, at either of our indices,
\begin{equation}\label{eq:old-coprime}
 B_\Pi^{(0)}(m,n)=A_\Pi(m,n),\qquad
 B_\Pi^{(1)}(m,n)=c_{10}A_\Pi(m,n),\qquad
 B_\Pi^{(2)}(m,n)=c_{20}A_\Pi(m,n).
\end{equation}
For cuspidal $\Pi$, the
Gram--Schmidt bounds are $|c_{10}|+|c_{20}|\ll N^{\theta_3-1/2}$,
where $\theta_3=5/14<1/2$.  Thus
\[
 \begin{aligned}
\sum_{j=0}^{2}
 \left|\overline{B_\Pi^{(j)}(m_1,m_2)}
 B_\Pi^{(j)}(n_1,n_2)\right|
&\ll (1+N^{2\theta_3-1})
 |A_\Pi(m_1,m_2)A_\Pi(n_1,n_2)|\\
& \ll |A_\Pi(m_1,m_2)A_\Pi(n_1,n_2)|.
 \end{aligned}
\]
For every prime $p$, by \eqref{eq:local-Schur},
$A_\Pi(p^r,p^u)$ is a Schur polynomial of degree $2r+u$
with $\tfrac12(r+1)(u+1)(r+u+2)$ monomials, counted with multiplicity.
The Kim--Sarnak bound
$|\alpha_{\Pi,j}(p)|\leq p^{\theta_3}$
and multiplicativity thus give the uniform estimate
$|A_\Pi(m,n)|\ll_\eps (mn)^{2\theta_3+\eps}$.

Put $T_\Pi=1+\|\mu_\Pi\|$.
By Brumley \cite[Theorem~3]{Brumley} and Miller
\cite[Corollary~1.4]{Miller}, respectively, for some $B>0$ and
$T\geq1$ we have
\[
 L(1,\Pi,\Ad)^{-1}\ll T_\Pi^B,
 \qquad
 \#\{\Pi\in\mathcal B(1):T_\Pi\leq T\}\ll T^5.
\]
Here
$\operatorname*{Res}_{s=1}L(s,\Pi\times\widetilde\Pi)
=L(1,\Pi,\Ad)$, and the analytic conductor of
$\Pi\times\widetilde\Pi$ at level $1$ is bounded by a fixed
power of $T_\Pi$, giving the first estimate.
For the second, the Laplace eigenvalue satisfies
$
 \lambda_\Pi=1-\frac12\sum_{j=1}^3\mu_{\Pi,j}^2
 \ll T_\Pi^2,
$
so Miller's eigenvalue counting bound yields the stated
bound in terms of $T_\Pi$.
Since $|h(\mu_\Pi)|\ll_{h,D}T_\Pi^{-D}$ for every $D>0$,
dyadic summation with $D>B+5$ shows that
\[
 \sum_{\Pi\in\mathcal B(1)}
 \frac{|h(\mu_\Pi)|}{L(1,\Pi,\Ad)}
 \ll_{h,D}\sum_{k\geq0}2^{k(B+5-D)}<\infty.
\]
Consequently,
\begin{equation}\label{eq:cusp-old-bound}
 |\cC_{\old}|\ll_{h,\eps} Q^{2\theta_3+\eps}.
\end{equation}

\subsection{Minimal Eisenstein spectrum.}
By \cite[Proposition~6]{BC}, the minimal Eisenstein oldvectors
satisfy \eqref{eq:old-coprime} at our indices, with
$|c_{10}|+|c_{20}|\ll N^{-1/2}$.  Their unitary Satake
parameters and the Schur formula give
$|A_s(m,n)|\ll_\eps(mn)^\eps$ uniformly on the tempered plane.
Hence
\eqref{eq:old-coprime} gives, for the pair of indices in
\eqref{eq:Emin},
\[
 \sum_{j=0}^{2}
 \bigg|\overline{B_s^{(j)}(m_1,m_2)}B_s^{(j)}(n_1,n_2)\bigg|
 \ll (1+N^{-1})
 |A_s(n_1,n_2)A_s(m_1,m_2)|\ll_\eps Q^\eps.
\]
Write $s_i=\i t_i$, where $t_3=-t_1-t_2$.
The classical bound
$|\zeta(1+\i t)|^{-1}\ll_\varepsilon(1+|t|)^\varepsilon$ for real $t$ 
(see \cite[Eq.\,(3.11.10)]{Titchmarsh}),
with $1/\zeta(1)=0$ understood by continuity,
gives that the three squared reciprocal factors in
\eqref{eq:Emin} satisfy
\[
 \prod_{1\leq i<j\leq3}
 |\zeta(1+\i(t_i-t_j))|^{-2}
 \ll_\varepsilon (1+|t_1|+|t_2|)^{6\varepsilon}.
\]
For every $A>0$, the defining decay of $h$ gives
$|h(\i t_1,\i t_2,\i t_3)|\ll_{h,A}(1+|t_1|+|t_2|)^{-A}$.
Thus, taking $A>2+6\varepsilon$ in \eqref{eq:Emin} yields
\begin{equation}\label{eq:minimal}
 |\cE_{\min}|\ll_{h,\eps}Q^\eps
 \int_{\mathbb R^2}(1+|t_1|+|t_2|)^{-A+6\varepsilon}\,\mathrm{d}t_1\,\mathrm{d}t_2
 \ll_{h,\eps}Q^\eps.
\end{equation}

\subsection{Maximal Eisenstein old spectrum.}
For a level $1$ inducing $\GL_2$ Maass form $\phi$ and
$s=\i\tau$, \cite[Proposition~7(b)]{BC} gives the oldvector reduction
\eqref{eq:old-coprime} with
$|c_{10}|+|c_{20}|\ll N^{\theta_2-1/2}$, uniformly in $\phi$
and $\tau$. As before, the two nontrivial translates have zero
coefficient at every index coprime to $N$. 
In each monomial of the Schur polynomial
\eqref{eq:local-Schur}, the exponents of the two $\GL_2$
parameters differ by at most $r+u$.  Their moduli are reciprocal,
so the Kim--Sarnak bound \cite{KS} gives, at every prime $p$
and uniformly in $\phi$ and $\tau$,
\[
 |A_{(\phi,\i\tau)}(p^r,p^u)|
 \leq \frac{(r+1)(u+1)(r+u+2)}{2}\,p^{\theta_2(r+u)}.
\]
Multiplicativity then yields
$|A_{(\phi,\i\tau)}(m,n)|\ll_\eps(mn)^{\theta_2+\eps}$.
Since
$2\theta_2-1<0$, \eqref{eq:old-coprime} implies
\[
 \begin{aligned}
\sum_{j=0}^{2}
 \bigg|\overline{B_{(\phi,\i\tau)}^{(j)}(m_1,m_2)}B_{(\phi,\i\tau)}^{(j)}(n_1,n_2)\bigg|
\ll (1+N^{2\theta_2-1})
 |A_{(\phi,\i\tau)}(n_1,n_2)
   A_{(\phi,\i\tau)}(m_1,m_2)|\ll_\eps Q^{\theta_2+\eps}.
 \end{aligned}
\]
Moreover, for every $\varepsilon>0$, we have
\bna\label{estimate}
 L(1,\phi,\Ad)^{-1}\ll_\varepsilon(1+|t_\phi|)^\varepsilon,
 \qquad
 |L(1+3\i\tau,\phi)|^{-2}
 \ll_\varepsilon(1+|t_\phi|+|\tau|)^\varepsilon,
\ena
by \cite[Theorem~0.2]{HoffsteinLockhart}
and \cite[Theorem~C(2)]{HoffsteinRamakrishnan},
respectively, applying the latter to
$\phi\otimes|\det|^{3\i\tau}$.
The adjoint $L$-value bound is used without exceptions, with a
possibly ineffective implied constant.
For nonvanishing and related lower bounds, see also
\cite{JacquetShalikaNV,GLS}.
Since
$\mu_{\phi,\i\tau}=(\i\tau+t_\phi,\i\tau-t_\phi,-2\i\tau)$,
we have
$\|\mu_{\phi,\i\tau}\|^2=2|t_\phi|^2+6\tau^2$, and hence
$1+\|\mu_{\phi,\i\tau}\|\asymp1+|t_\phi|+|\tau|$.
Thus $|h(\mu_{\phi,\i\tau})|\ll_{h,A}(1+|t_\phi|+|\tau|)^{-A}$ for every $A>0$,
also for exceptional $t_\phi$, since $|\Re(t_\phi)|\leq7/64$.
The level $1$ Weyl bound \cite[Corollary~3.2.3, pg.\,47]{Palm} gives
$\#\{\phi:|t_\phi|\leq T\}\ll(1+T)^2$.
Consequently, taking absolute values in \eqref{eq:Emax-old}
and summing over $1+|t_\phi|\asymp2^k$, we obtain
\[
\begin{aligned}
 |\cE_{\max,\old}|
 &\ll_{h,A,\eps}Q^{\theta_2+\eps}
 \sum_{\phi}\int_{\mathbb R}(1+|t_\phi|+|\tau|)^{-A+2\varepsilon}\,\mathrm{d}\tau\\
 &\ll_{h,A,\eps}Q^{\theta_2+\eps}
 \sum_{k\geq0}2^{k(3+2\varepsilon-A)}.
\end{aligned}
\]
Choosing $A>3+2\varepsilon$ proves
\begin{equation}\label{eq:max-old-bound}
 |\cE_{\max,\old}|\ll_{h,\eps}Q^{\theta_2+\eps}.
\end{equation}

\subsection{Maximal Eisenstein new spectrum.}
Let $\phi$ be a level $N$ $\GL(2)$ newform and $s=\i\tau$.
Since $(Q,N)=1$, all primes in the coefficient indices are
unramified.  The Schur-weight argument above gives
\bea\label{the coefficient bound}
 |A_{(\phi,s)}(n_1,n_2)A_{(\phi,s)}(m_1,m_2)|
 \ll_\eps Q^{\theta_2+\eps}.
\eea
By \cite[Theorem~0.2]{HoffsteinLockhart}
and
\cite[Theorem~C(2)]{HoffsteinRamakrishnan}, for every $\eps>0$,
\bea\label{new estimate}
 L(1,\phi,\Ad)^{-1}\ll_\eps N^\eps(1+|t_\phi|)^\eps,\qquad
 |L(1+3\i\tau,\phi)|^{-2}\ll_\eps
 N^\eps(1+|t_\phi|+|\tau|)^\eps.
\eea
Palm's uniform Weyl law
\cite[Corollary~3.2.3, pg.\,47]{Palm}, applied to
$\Gamma_0^{\pm}(N)=\{\gamma\in\GL_2(\mathbb Z):N\mid\gamma_{21}\}$
of index $N+1$ in $\GL_2(\mathbb Z)$, gives
\[
 \#\{\phi\text{ newforms of level }N:|t_\phi|\leq T\}
 \ll N(1+T)^2
\]
uniformly for primes $N$ and $T\geq0$, including exceptional
parameters.
Since $1+\|\mu_{\phi,\i\tau}\|\asymp1+|t_\phi|+|\tau|$,
\eqref{the coefficient bound}, \eqref{new estimate}, and the rapid
decay of $h$ bound the absolute value of the integrand in
\eqref{eq:Emax-new}, for any fixed $A>3$, by
\[
 \ll_{h,A,\eps}N^\eps Q^{\theta_2+\eps}
 (1+|t_\phi|+|\tau|)^{-A}.
\]
Integrating in $\tau$ and applying the Weyl bound on each dyadic
range $1+|t_\phi|\asymp2^k$, we obtain
\begin{equation}\label{max new bound}
\begin{aligned}
 |\cE_{\max,\new}|
 &\ll_{h,A,\eps}N^{1+\eps}Q^{\theta_2+\eps}
 \sum_{k\geq0}2^{k(3-A)}
 \ll_{h,\eps}N^{1+\eps}Q^{\theta_2+\eps},
\end{aligned}
\end{equation}
where $A>3$ is fixed.

\subsection{Conclusion.}
Sections~\ref{subsec:short-weyl}--\ref{subsec:long-weyl} give
$\cS_4+\cS_5+\cS_6=O_{h,\eta}(N^{-2})$.
The bounds \eqref{eq:cusp-old-bound}, \eqref{eq:minimal}, and
\eqref{eq:max-old-bound} give a total contribution
$O_{h,\varepsilon}(Q^{2\theta_3+\varepsilon})$, since $\theta_2<2\theta_3$.
Together with \eqref{max new bound} and $Q\geq1$,
\eqref{eq:full-kuznetsov}, after division by $P_N/N^2$, yields
\[
 \cC_{\new}=N^2H_h\delta_{n_1,m_1}\delta_{n_2,m_2}
 +O_{h,\eta,\eps}\bigl(N^{1+\eps}Q^{\theta_2+\eps}+Q^{2\theta_3+\varepsilon}\bigr),
\]
Since $P_N-N^2=N+1$, replacing $N^2H_h$ by $P_NH_h$
changes the diagonal term by $O_h(N)$, which is absorbed by the
error term above, giving \eqref{eq:neworth}.

\bibliographystyle{amsplain}

\end{document}